\newtheorem{thm}{Theorem}
\newtheorem{prop}{Proposition}
\theoremstyle{remark}
\newtheorem{rmk}{Remark}
\theoremstyle{definition}
 \DeclareMathOperator\re{{Re}}
 \numberwithin{equation}{section}
\newcommand{\D}{\displaystyle}
\numberwithin{equation}{section}
\newcounter{comment}
\def\a{\alpha}
\def\b{\beta}
\def\k{\kappa_{0}}
\def\K{\kappa_{\infty}}
\begin{document}

\title{Existence and Uniqueness of Tronqu\'{e}e Solutions of the Third and Fourth Painlev\'{e} Equations}
\date{\today}
\author{Y. Lin$^\ast$, D. Dai$^\dag$ and P. Tibboel$^\ddag$}
\maketitle

\begin{abstract}
  It is well-known that the first and second Painlev\'e equations admit solutions characterised by divergent asymptotic expansions near infinity in specified sectors of the complex plane.  Such solutions are pole-free in these sectors and called tronqu\'ee solutions by Boutroux. In this paper, we show that similar solutions exist for the third and fourth Painlev\'e equations as well.
\end{abstract}

%%%%%%%%%%%%%%%%%%%%%%%%%%%%%%%%%%%%%%%%%%%%%%%%%%%%%%%%%%%%%%%%%%

\vspace{5mm}

\noindent 2010 \textit{Mathematics Subject Classification}. Primary
33E17, 34M55.

\noindent \textit{Keywords and phrases}: the third and fourth Painlev\'{e} equations, tronqu\'{e}e solutions, asymptotic analysis.

%%%%%%%%%%%%%%%%%%%%%%%%%%%%%%%%%%%%%%%%%%%%%%%%%%%%%%%%%%%%%%%%%%

\vspace{5mm}

\hrule width 65mm

\vspace{2mm}

\begin{description}

\item \hspace*{3.8mm}$\ast$ Liu Bie Ju Centre for Mathematical Sciences, City University of
Hong Kong, Hong Kong.  \\
Email: \texttt{yulin2@cityu.edu.hk}

\item \hspace*{3.8mm}$\dag$ Department of Mathematics, City University of
Hong Kong, Hong Kong. \\
Email: \texttt{dandai@cityu.edu.hk}

\item \hspace*{3.8mm}$\ddag$ Department of Mathematics, City University of
Hong Kong, Hong Kong. \\
Email: \texttt{ptibboel@cityu.edu.hk} (corresponding author)

% added my e-mail address...

\end{description}

\newpage

%%%%%%%%%%%%%%%%%%%%%%%%%%%%%%%%%%%%%%%%%%%%%%%%%%%%%%%%%%%%%%%%%%%%%%%%%%%%%%%%%%

\section{Introduction and statement of results}

The third and fourth Painlev\'e equations are the following two second order nonlinear differential equations
\begin{eqnarray}
  \textrm{P}_{\textrm{III}}:  \qquad \frac{d^{2}u}{dx^{2}}&=&\frac{1}{u}\left(\frac{du}{dx}\right)^{2}-\frac{1}{x}\frac{du}{dx}+\frac{1}{x}\left(\alpha u^{2}+\beta\right)+\gamma u^{3}+\frac{\delta}{u}, \label{Painleve III} \\
  \textrm{P}_{\textrm{IV}}: \qquad \frac{d^{2}u}{dx^{2}}&=&\frac{1}{2u}\left(\frac{du}{dx}\right)^{2}+\frac{3}{2}u^{3}+4xu^{2}+2(x^{2}-\alpha)u+\frac{\beta}{u},  \label{Painleve IV}
\end{eqnarray}
where $\alpha, \beta , \gamma$ and $\delta$ are arbitrary constants. Their solutions, which are called the Painlev\'e transcendents, satisfy the famous Painlev\'e property, i.e. all of their movable singularities are poles. Although the Painlev\'e equations were originally studied from a purely mathematical point of view, it has been realised in recent years that they play an important role in many other mathematical and physical fields (see \cite{Conte-book,Fokas-book,Gromak-book} and references therein).
% Changed 'in the recent years' into 'in recent years' and changed ';see... therein' into '(see...therein)'.
Due to their importance as well as beautiful properties, the Painlev\'e transcendents are viewed as new members of the community of special functions (see \cite{Clark2006} and \cite[Chap. 32]{Olver-book}).
% Changed the '; see...' into '(see...)'.

An important part of studying Painlev\'e transcendents is to investigate their asymptotic behaviour. More specifically, we discern two directions of research: The first direction is to study asymptotics of certain solutions on the real axis and find the relations between their asymptotic behaviours at two endpoints of an interval, usually $(-\infty,\infty)$ or $(0,\infty)$. These relations are called connection formulas (see for example \cite{And:Kit,Bas:Cla:Law:McL,Dei:Zhou1995,Its:Kap1998}). Connection formulas have also been  studied on the imaginary axis (see \cite {Kit:Var2004,Kit:Var2010}).
The other direction is to study the asymptotics in the complex plane. Usually, Painlev\'e transcendents have poles in the complex plane, whose locations depend on the initial conditions of the corresponding differential equation.
% Added a comma after 'Usually' and added 'of the corresponding differential equation'.
But, in sectors containing one special ray in the complex $x$-plane,
 there also exist some solutions which are pole-free when $|x|$ is large.
 Such solutions were named \emph{tronqu\'ee} solutions by Boutroux \cite{boutroux1913} when he studied Painlev\'e equations one hundred years ago.
% Changed 'are called' into 'were named'.
 These solutions usually have free parameters appearing in exponentially small terms for large $|x|$ (see Its and Kapaev \cite{Its:Kap2003}).
 Moreover, there are some solutions without any parameters, which are pole-free in larger sectors containing three special rays. Such solutions are called \emph{tritronqu\'ee} solutions.
 Recently, Joshi and Kitaev \cite{Joshi2001} reconsidered these solutions and proved the existence and uniqueness of tritronqu\'ee solutions for the first Painlev\'e equation with more modern techniques.
 Later on, similar results were obtained for the first and second Painlev\'e hierarchies as well (see \cite{Dai:Zhang,Joshi2003-1,Joshi2003-2,Joshi2009}). Note that
 tronqu\'ee solutions for the fifth Painlev\' e equation are also studied by Andreev and Kitaev
\cite{And:Kit1997} and Shimomura \cite{Shimo}. In these two papers, the method used are different
from that in \cite{Joshi2001}.

In the literature, tronqu\'ee solutions to the first Painlev\'e equation
\begin{equation}
  \textrm{P}_{\textrm{I}}:  \qquad \frac{d^{2}u}{dx^{2}} = 6 u^2(x) -x
\end{equation}
are well studied.
For $\textrm{P}_{\textrm{I}}$, when studying the behaviour of their tritronqu\'ee solutions in the unbounded domain, people are also interested in their properties in the finite domain.
Based on numerical results (see for example \cite{Forn:Weid}), Dubrovin \cite{Dub:Gra:Kle} conjectures that the tritronqu\'ee solutions for $\textrm{P}_{\textrm{I}}$ are analytic in a neighbourhood of the origin and in a sector of central angle $8\pi/5$ containing the origin. This conjecture was proved by Costin, Huang and Tanveer \cite{Cos:Hua:Tan} very recently. It is not  clear whether similar conjectures are true for other Painlev\'e equations.
% Removed 'some' in 'Based on some numerical results.'

In this paper, we will follow Joshi and Kitaev's idea in \cite{Joshi2001} and study the tronqu\'ee solutions for the $\textrm{P}_{\textrm{III}}$ and $\textrm{P}_{\textrm{IV}}$ equations. Although some divergent power series solutions for these two equations are known, tronqu\'ee solutions, to the best of our knowledge, have not appeared in the literature. So we think it is worthwhile to prove the existence of these tronqu\'ee solutions, give the regions of validity of these solutions and their general asymptotic behaviour.
Moreover, it is worth mentioning that comparing with other methods like the isomonodromy method, Joshi and Kitaev's approach used in this paper
is more straightforward.
% Changed 'isomonodromy method' into 'the isomonodromy method'.
% I don't think we can say that Joshi and Kitaev's approach is easier to follow. I suggest we change 'elementary and easier to follow' into 'straightforward'.

Before we state our main results, we note that (see \cite[p.150]{Gromak-book} and \cite{Mil:Cla:Bas}), using B\"{a}cklund transformations, it suffices to study only two cases for the $\textrm{P}_{\textrm{III}}$ equations:
   (i) $\gamma \delta \neq 0$ and
   (ii) $\gamma \delta = 0$.
Moreover, these two cases can be reduced to the following canonical ones: (i) $\gamma = 1$ and $\delta = -1$ and (ii) $\alpha=1, \gamma=0$ and $\delta = -1$ without loss of generality (see \cite[p.150]{Gromak-book} and \cite{Mil:Cla:Bas}).
% Added '(see \cite[p.150]{Gromak-book} and \cite{Mil:Cla:Bas})' after 'we note that'.
Thus, for $\textrm{P}_{\textrm{III}}$,  we only need to study the following two equations:
\begin{align}\label{PIII Case 1}
  \textrm{P}_{\textrm{III}}^{(\textrm{i})}:  \qquad \frac{d^{2}u}{dx^{2}}=\frac{1}{u}\left(\frac{du}{dx}\right)^{2}-\frac{1}{x}\frac{du}{dx}+\frac{1}{x}\left(\alpha u^{2}+\beta\right)+u^{3}-\frac{1}{u},
\end{align}
and
\begin{align}\label{PIII Case 2}
  \textrm{P}_{\textrm{III}}^{(\textrm{ii})}:  \qquad \frac{d^{2}u}{dx^{2}}=\frac{1}{u}\left(\frac{du}{dx}\right)^{2}-\frac{1}{x}\frac{du}{dx}+\frac{1}{x}\left(u^{2}+\beta\right)-\frac{1}{u}.
\end{align}
For the above two equations, we have the following results.
\begin{thm} \label{thm-p3-1}
For $k=0,1$ and $x_0 \neq 0$, the following two statements hold:
% Changed 'we have' into 'the following two statements hold'.
  \begin{itemize}
    \item[1. ]  There exist one-parameter solutions of $\textrm{P}_{\textrm{III}}^{(\textrm{i})}$ in the sectors
    \begin{align}
      \hspace{-1cm} S^{(m)}_k = \begin{cases}
        \biggl\{x\in\mathbb{C}\; |\;|x|>|x_0|, - \frac{\pi}{2} + k \pi <\arg{x}< \frac{\pi}{2} + k \pi  \biggr\}, & \textrm{for }m=0,2, \vspace{2mm} \\
        \biggl\{x\in\mathbb{C}\; |\; |x|>|x_0|, k \pi <\arg{x}< (k+1) \pi  \biggr\}, & \textrm{for }m=1,3
      \end{cases}
    \end{align}
    with the following asymptotic expansion
    \begin{equation} \label{p3-1-expan}
      u(x)\sim\sum\limits_{n=0}^{\infty} a_{n}^{(m)}x^{-n} \qquad \textrm{as } |x| \to \infty.
    \end{equation}
    Here $a_{0}^{(m)} = e^{\frac{m\pi i}{2}}$, $m=0,1,2,3,$ and the subsequent coefficients $a_{n+1}^{(m)}$, $n \geq 0$, are determined by the recurrence relation
    \begin{equation} \label{p3-1-recur}
      \begin{cases}
      2a_{0}^{(m)}A_{0}^{(m)}  a_{n+1}^{(m)}=(\beta-1-n)a_{n}^{(m)}- A_{0}^{(m)}\sum\limits_{l=1}^{n}a_{l}^{(m)}a_{n+1-l}^{(m)}\\
      \hspace{3.1cm} -\sum\limits_{k=1}^{n+1}A_k^{(m)} \sum\limits_{l=0}^{n+1-k}a_{l}^{(m)}a_{n+1-k-l}^{(m)}  \\
      2a_{0}^{(m)}A_{0}^{(m)}A_{n+1}^{(m)}=\alpha \delta_{n,0} + (n+\beta-2)A_{n}^{(m)} - a_{0}^{(m)}\sum\limits_{l=1}^{n}A_{l}^{(m)}A_{n+1-l}^{(m)}\\
      \hspace{3.1cm} - \sum\limits_{k=1}^{n}a_{k}^{(m)}\sum\limits_{l=0}^{n+1-k}A_{l}^{(m)}A_{n+1-k-l}^{(m)}
    \end{cases}
    \end{equation}
    with $A_0^{(m)}= - (a_0^{(m)})^2$, $\delta_{0,0} =1$ and $\delta_{n,0} =0$ for $n>0.$

    \item[2. ] There exists a unique solution of $\textrm{P}_{\textrm{III}}^{(\textrm{i})}$ in each of the following sectors
    \begin{align} \label{p3-1-large-sector}
      \hspace{-1cm} \Omega^{(m)}_k = \begin{cases}
        \biggl\{x\in\mathbb{C}\; |\;|x|>|x_0|, - \frac{\pi}{2} + k \pi <\arg{x}< \frac{3\pi}{2} + k \pi  \biggr\}, & \textrm{for }m=0,2, \vspace{2mm} \\
        \biggl\{x\in\mathbb{C}\; |\; |x|>|x_0|, k \pi <\arg{x}< (k+2) \pi  \biggr\}, & \textrm{for }m=1,3
      \end{cases}
    \end{align}
    with the asymptotic expansion given in \eqref{p3-1-expan}.
  \end{itemize}
\end{thm}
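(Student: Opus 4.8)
The plan is to follow the scheme of Joshi and Kitaev \cite{Joshi2001}: recast each assertion as a fixed-point problem for an integral operator on a weighted space of analytic functions on a truncated sector, realise the free parameter of Part~1 as the amplitude of a recessive exponential, and derive the uniqueness of Part~2 from the impossibility of sustaining such an exponential throughout a sector of angular width larger than $\pi$. First I would check that \eqref{p3-1-recur} is exactly the recurrence obtained by inserting the ansatz $u\sim\sum_{n\ge0}a_n^{(m)}x^{-n}$ (together with the auxiliary series whose coefficients are the $A_n^{(m)}$, which appears once \eqref{PIII Case 1} is written as a polynomial first-order system) into $\textrm{P}_{\textrm{III}}^{(\textrm{i})}$ and matching powers of $x^{-1}$; it is uniquely solvable at every step because the pivot $2a_0^{(m)}A_0^{(m)}=-2(a_0^{(m)})^3$ does not vanish. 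This produces, for each $m$, a single formal series $\widehat u^{(m)}$.

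Fix $m$ and $k$, write $a_0=a_0^{(m)}$, pick a large integer $N$, and set $u_N(x)=\sum_{n=0}^{N}a_n^{(m)}x^{-n}$, a rational function analytic and nonvanishing for $|x|$ large. Substituting $u=u_N+v$ into \eqref{PIII Case 1}, expanding the rational nonlinearity and rearranging, $v$ satisfies
\[
  v''+\frac1x\,v'-\lambda^2 v=\rho(x)+\frac{q_1(x)}{x}\,v'+\frac{q_0(x)}{x}\,v+\mathcal N(x,v,v'),
\]
where $\lambda^2=3(a_0)^2+(a_0)^{-2}=4(a_0)^2$ is the linearisation of $u^3-u^{-1}$ at $u=a_0$, $\lambda$ is the square root for which $\re(\lambda x)>0$ on $S_k^{(m)}$ (so that $S_k^{(m)}$ is the half-plane with axis $\arg x=-\arg\lambda$), $\rho(x)=O(x^{-N-1})$ is the truncation residual, $q_0,q_1$ are bounded, and $\mathcal N$ collects the terms at least quadratic in $(v,v')$. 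The homogeneous part is a modified Bessel equation with basis $I_0(\lambda x)\sim e^{\lambda x}x^{-1/2}$ and $K_0(\lambda x)\sim e^{-\lambda x}x^{-1/2}$, the latter recessive on $S_k^{(m)}$. Varying parameters, I would recast the equation for $v$ as
\[
  v(x)=C\,K_0(\lambda x)+\int_{\infty}^{x}\!K(x,s)\,G(s)\,ds-\int_{x_\ast}^{x}\!\widetilde K(x,s)\,G(s)\,ds=:\mathcal T_C[v](x),
\]
with $G=\rho+\tfrac{q_1}{s}v'+\tfrac{q_0}{s}v+\mathcal N$, where the kernel $K(x,s)$ of the first integral is built from the recessive solution (hence carries the factor $e^{\lambda(x-s)}$, which decays as $s\to\infty$ along a ray inside $S_k^{(m)}$), $\widetilde K$ is the analogous dominant kernel, and $x_\ast$ is a fixed base point with $|x_\ast|$ large. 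On the Banach space $X_R=\{v\ \text{analytic on}\ S_k^{(m)}\cap\{|x|>R\}:\ \|v\|:=\sup|x|^{N+1}(|v|+|x||v'|)<\infty\}$, the Laplace-type estimate $\int_{|x|}^{\infty}|s|^{1/2}e^{-c(|s|-|x|)}|s|^{-N-1}\,d|s|\lesssim|x|^{-N-1}$ and its derivative analogue, together with the quadratic smallness of $\|\mathcal N(\cdot,v,v')\|$, show that for $R$ large and $|C|$ small $\mathcal T_C$ maps a small ball of $X_R$ into itself and is a contraction there. Its fixed point yields $u=u_N+v$ with $u-u_N=O(x^{-N-1})$; since $N$ is arbitrary and the fixed points for distinct $N$ (and the same $C$) coincide, $u$ carries the full expansion \eqref{p3-1-expan}, and letting $C$ range over a neighbourhood of $0$ gives the one-parameter family. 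The four values of $m$ are handled identically (only $a_0$, hence $\lambda$, changes).

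For Part~2, observe that $\Omega_k^{(m)}$ in \eqref{p3-1-large-sector} is the plane slit along a single ray (with $|x|>|x_0|$) and contains both a direction in which $e^{\lambda x}$ grows (the axis $\arg x=-\arg\lambda$ of $S_k^{(m)}$) and one in which $e^{-\lambda x}$ grows (the opposite ray). \emph{Existence}: I would rerun the contraction on $\Omega_k^{(m)}\cap\{|x|>R\}$ with $C=0$, choosing for each $x$ the path in $\int^x K(x,s)G(s)\,ds$ so that it leaves $x$, stays in $\Omega_k^{(m)}$, avoids $\{|s|\le R\}$, and runs to infinity along a curve on which $\re(\lambda s)$ is nondecreasing; such a path exists precisely because only one ray is removed, and the kernel estimates carry over (at worst with the loss of a bounded power of $|x|$ on an initial arc, harmless since $N$ is free), producing a solution $u$ analytic---hence pole-free---on $\Omega_k^{(m)}\cap\{|x|>R\}$ with the expansion \eqref{p3-1-expan}. \emph{Uniqueness}: if $u_1,u_2$ are two such solutions, then $w=u_1-u_2$ is $o(x^{-M})$ for every $M$ throughout $\Omega_k^{(m)}$ and solves a linear equation $w''=P(x)w'+Q(x)w$ with $P(x)=O(1/x)$ and $Q(x)=\lambda^2+O(1/x)$; by asymptotic integration its solution space has a basis behaving, up to algebraic factors, like $e^{-\lambda x}$ and $e^{\lambda x}$, so any $w$ decaying faster than every power along $\arg x=-\arg\lambda$ is proportional to the recessive solution there while any such $w$ along the opposite ray is proportional to the (different) recessive solution there, whence $w\equiv0$.

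The main obstacle is the package of kernel estimates that make $\mathcal T_C$ (and its Part~2 counterpart) a contraction in the weighted norm---above all, bounding $\int^x K(x,s)G(s)\,ds$ and its $x$-derivative without spoiling the weight---together with, in Part~2, the construction of the $x$-adapted integration contour inside the slit sector $\Omega_k^{(m)}$. The verification of the formal recurrence and the closing of the iteration are then routine.
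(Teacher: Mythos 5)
Your proposal is sound in outline, but it takes a genuinely different route from the paper. The paper deliberately does \emph{not} work with the scalar equation \eqref{PIII Case 1}: because Wasow's Theorem~\ref{Wasow Thm} requires the right-hand side to be polynomial in the unknowns, the authors pass to the first-order system \eqref{p3-1-first-order} in $(u,U)$ (this is also why the recurrence \eqref{p3-1-recur} involves the auxiliary coefficients $A_n^{(m)}$, as you correctly anticipate). Existence of an actual solution with the full expansion then comes for free from Wasow's Theorem~12.1 applied to the translated system, the one-parameter freedom in Part~1 comes from applying Wasow's Theorem~12.3 to the linearised perturbation equation (yielding $\widehat u\sim c_1x^{d_1}e^{\lambda_1x}+c_2x^{d_2}e^{\lambda_2x}$ with $\lambda_{1,2}=\mp2e^{-m\pi i/2}$, matching your $\pm2a_0$ up to relabelling), and Part~2 is obtained by \emph{gluing}: two Part-1 solutions on the overlapping half-planes $S^{(m)}_{k,\varepsilon}$ and $S^{(m)}_{k+1,-\varepsilon}$ have a difference that is $O(x^{-j})$ for all $j$ on the thin overlap straddling the anti-Stokes ray, where neither $e^{\lambda_1x}$ nor $e^{\lambda_2x}$ is recessive, forcing $c_1=c_2=0$; this gives uniqueness and simultaneously extends the sector of validity to $\Omega^{(m)}_k$. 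Your contraction-mapping construction on the scalar equation buys independence from Wasow's machinery (the $1/u$ terms are harmless once $u_N$ is bounded away from zero), at the cost of having to establish the kernel estimates, the bootstrap from $u-u_N=O(x^{-N-1})$ to the full expansion, and, for Part~2, the $x$-dependent contours by hand; the paper's gluing argument gets Part~2 existence essentially for free from Part~1.

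Two points in your sketch deserve care. First, in the Part~2 existence step you prescribe only one contour condition ($\re(\lambda s)$ nondecreasing), but the variation-of-parameters representation has two kernels and you need, from each $x\in\Omega^{(m)}_k$, \emph{both} a path along which $\re(\lambda s)$ is nondecreasing and one along which it is nonincreasing, each staying in the slit sector and in $|s|>R$; this is exactly where the width $2\pi$ of $\Omega^{(m)}_k$ enters. Second, your uniqueness argument via the two opposite rays $\arg x=-\arg\lambda$ and $\arg x=\pi-\arg\lambda$ implicitly uses that a solution recessive along the first ray remains dominant along the second (persistence of dominant asymptotics across the intermediate anti-Stokes ray); the paper's variant---testing the difference on the anti-Stokes ray itself, where both exponentials have unit modulus---avoids any Stokes bookkeeping and is the cleaner way to force $c_1=c_2=0$. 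Neither point is a fatal gap, but both should be made explicit in a full write-up.
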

\begin{rmk}
  It is known that there exist rational solutions for $\textrm{P}_{\textrm{III}}^{(\textrm{i})}$ if and only if $\alpha \pm \beta = 4k, k \in \mathbb{Z}$ (see \cite{Clark2006,Mil:Cla:Bas}). As these rational solutions satisfy the above asymptotic expansion \eqref{p3-1-expan}, they are the unique tronqu\'ee solutions in the second part of Theorem~\ref{thm-p3-1}.
  They are pole-free in $\Omega^{(m)}_k$ when $|x|$ is large since all of their poles are located in a bounded region in the complex $x$-plane. Moreover, our theorem suggests that, even for arbitrary parameters $\alpha$ and $\beta$, there still exist tronqu\'ee solutions which are pole-free in a pretty big sector $\Omega^{(m)}_k$ for large $x$. We think this is an interesting observation.
\end{rmk}

\begin{rmk} \label{rmk-anAn}
  The readers may wonder why there is another set of coefficients $\{A_n^{(m)}\}$ in the recurrence relations \eqref{p3-1-recur}. The reason is that, instead of considering the $\textrm{P}_{\textrm{III}}^{(\textrm{i})}$ equation \eqref{PIII Case 1} directly, we consider an equivalent system of first order differential equations instead. Similar situations occur in the  $\textrm{P}_{\textrm{III}}^{(\textrm{ii})}$ and $\textrm{P}_{\textrm{IV}}$ cases as well. The detailed explanations will be provided later in this section.
\end{rmk}

\begin{thm} \label{thm-p3-2}
  For $k=0,1,2,3$ and $x_0 \neq 0$, the following two statements hold true:
  % Changed 'we have' into 'the following two statements hold true'.
  \begin{itemize}
    \item[1. ]  There exist one-parameter solutions of $\textrm{P}_{\textrm{III}}^{(\textrm{ii})}$ in the sectors
    \begin{align}
      \hspace{-1.05cm} S^{(m)}_k = \begin{cases}
        \biggl\{x\in\mathbb{C}\; |\;|x|>|x_0|, - \frac{3\pi}{4} - \frac{k \pi}{2} <\arg{x}< \frac{3\pi}{4} - \frac{k \pi}{2}  \biggr\}, & \textrm{for }m=0,2, \vspace{2mm} \\
        \biggl\{x\in\mathbb{C}\; |\; |x|>|x_0|, \frac{\pi}{4} - \frac{k \pi}{2} <\arg{x}< \frac{7\pi}{4} - \frac{k \pi}{2}  \biggr\}, & \textrm{for }m=1
      \end{cases}
    \end{align}
    with the following asymptotic expansion
    \begin{equation} \label{p3-2-expan}
      u(x)\sim x^{\frac{1}{3}}\sum\limits_{n=0}^{\infty} a_{n}^{(m)}x^{-\frac{2n}{3}} \qquad \textrm{as } |x| \to \infty.
    \end{equation}
    Here $a_{0}^{(m)} = e^{\frac{2m\pi i}{3}}$, $m=0,1,2,$ and the subsequent coefficients $a_{n+1}^{(m)}$, $n \geq 0$, are determined by the recurrence relation
    \begin{equation} \label{p3-2-recur}
      \begin{cases}
      [a_0^{(m)}]^{2}(6a_{n+1}^{(m)} - 3 A_{n+1}^{(m)}) = (2n+2-3\beta)a_{n}^{(m)}
      -3a_0^{(m)}\sum\limits_{l=1}^{n}a_{l}^{(m)}a_{n+1-l}^{(m)}\\
      \hspace{4cm} + 3\sum\limits_{k=1}^{n}A_{k}^{(m)}\sum\limits_{l=0}^{n+1-k}a_{l}^{(m)}a_{n+1-k-l}^{(m)}\\
      [a_0^{(m)}]^{2}(3a_{n+1}^{(m)}-6A_{n+1}^{(m)})=(3\beta-4+2n)A_{n}^{(m)}
      -3a_0^{(m)}\sum\limits_{l=1}^{n}A_{l}^{(m)}A_{n+1-l}^{(m)}\\
      \hspace{4cm} -3\sum\limits_{k=1}^{n}a_{k}^{(m)}\sum\limits_{l=0}^{n+1-k}A_{l}^{(m)}A_{n+1-k-l}^{(m)}
    \end{cases}
    \end{equation}
    with $A_0^{(m)}= - a_0^{(m)}$.

    \item[2. ] For any branch of $x^{1/3},$ there exists a unique solution of $\textrm{P}_{\textrm{III}}^{(\textrm{ii})}$ in $\mathbb{C}\setminus \Gamma$
    with the asymptotic expansion given in \eqref{p3-2-expan}. Here $\Gamma$ is an arbitrary branch cut connecting 0 and $\infty$.
  \end{itemize}
\end{thm}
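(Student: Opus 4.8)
The plan is to follow the approach of Joshi and Kitaev \cite{Joshi2001}: recast $\textrm{P}_{\textrm{III}}^{(\textrm{ii})}$ as a first-order system, strip off the formal series, and solve the resulting equation for the remainder by a fixed-point argument in a weighted space of analytic functions on the sector. Concretely, following Remark~\ref{rmk-anAn}, I would first rewrite \eqref{PIII Case 2} as a first-order system for a pair $(u,w)$, with the auxiliary unknown $w$ normalised so that its asymptotic coefficients are the $A_n^{(m)}$ and $A_0^{(m)}=-a_0^{(m)}$. Substituting $u\sim x^{1/3}\sum_n a_n^{(m)}x^{-2n/3}$, $w\sim x^{1/3}\sum_n A_n^{(m)}x^{-2n/3}$ and matching powers of $x^{-2/3}$ yields at the lowest order $\bigl(a_0^{(m)}\bigr)^{3}=1$ — hence the three branches $m=0,1,2$ — and at the higher orders exactly the recurrence \eqref{p3-2-recur}; a routine majorant estimate shows that $a_n^{(m)},A_n^{(m)}$ grow at most geometrically, so these are genuine (Gevrey) asymptotic series. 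I then fix a truncation $\mathbf{y}_N$ of the formal solution at order $N$, set $\mathbf{y}=\mathbf{y}_N+\mathbf{r}$, and obtain for $\mathbf{r}$ a system $\mathbf{r}'=\Lambda(x)\mathbf{r}+\mathbf{g}(x,\mathbf{r})$ in which $\Lambda(x)$ is the linearisation at $\mathbf{y}_N$ and $\mathbf{g}$ collects the quadratic nonlinearity together with the truncation residual, which is $O(x^{-N-1})$ uniformly on the sector. A short computation — equivalently, WKB applied to the linearisation of $\textrm{P}_{\textrm{III}}^{(\textrm{ii})}$, whose effective potential at $u_0=a_0^{(m)}x^{1/3}$ is $\tfrac{2u_0}{x}+\tfrac1{u_0^{2}}\sim 3a_0^{(m)}x^{-2/3}$ — shows that $\Lambda(x)$ has eigenvalues $\pm\lambda(x)$ with $\lambda(x)\sim c_m x^{-1/3}$ and $c_m^{2}=3a_0^{(m)}$, so that $\int^x\lambda\sim\tfrac32 c_m x^{2/3}$; the two homogeneous solutions therefore behave like $\exp\bigl(\pm\tfrac32 c_m x^{2/3}\bigr)$ up to algebraic factors, and the rays on which $\re\bigl(c_m x^{2/3}\bigr)=0$ — successive ones being $3\pi/2$ apart — are precisely the Stokes rays that bound the maximal sectors $S^{(m)}_k$ of the statement, in each of which exactly one of these exponentials is recessive.

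For Part 1 I would diagonalise $\Lambda$ and convert the system into the integral equation
\begin{equation*}
  \mathbf{r}(x)=C\,\mathbf{h}(x)+\int G(x,s)\,\mathbf{g}\bigl(s,\mathbf{r}(s)\bigr)\,ds,
\end{equation*}
where $\mathbf{h}$ is the recessive homogeneous solution, carrying the free constant $C\in\mathbb{C}$, $G$ is the associated Green's matrix, and the recessive component of the integral is taken from $\infty$ while the dominant one is taken from a fixed base point in $S^{(m)}_k$. On the Banach space $X=\{\mathbf{r}\ \text{analytic on }S^{(m)}_k:\ \|\mathbf{r}\|:=\sup_{S^{(m)}_k}|x|^{\rho}\,|\mathbf{r}(x)|<\infty\}$ with $\rho$ just below $N$, the right-hand side maps a small ball of $X$ into itself and is a contraction once $|x_0|$ is large; this rests on $|G(x,s)|\lesssim1$ for $s$ lying beyond $x$ along a path on which $\re\int_x^s\lambda$ has the favourable sign (available precisely because $S^{(m)}_k$ stops at the Stokes rays), together with $|\lambda(x)|^{-1}\sim|c_m|^{-1}|x|^{1/3}$, whence $\int|G(x,s)|\,|s|^{-N-1}\,|ds|\lesssim|x|^{-\rho-\epsilon}$. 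The fixed point $\mathbf{r}_C$ gives a solution $u_C=(\mathbf{y}_N)_1+(\mathbf{r}_C)_1$ of $\textrm{P}_{\textrm{III}}^{(\textrm{ii})}$ on $S^{(m)}_k$; letting $N\to\infty$ (the constructions for different $N$ being mutually consistent once $C$ is fixed) equips $u_C$ with the full expansion \eqref{p3-2-expan}, and since distinct $C$'s alter $u_C$ by $\sim C\exp\bigl(-\tfrac32 c_m x^{2/3}\bigr)\not\equiv0$, the family $\{u_C\}_{C\in\mathbb{C}}$ is the asserted one-parameter family.

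For Part 2 I would take the $C=0$ member $u_\star$ built in one reference sector and continue it analytically: across each Stokes ray it can only pick up a term proportional to the exponential recessive on the far side, hence subdominant, so \eqref{p3-2-expan} survives every crossing and, since the sectors $S^{(m)}_k$ overlap and chain around to tile $\mathbb{C}\setminus\Gamma$ (the cut $\Gamma$ only serving to fix a branch of $x^{1/3}$), $u_\star$ extends to a solution with asymptotics \eqref{p3-2-expan} on all of $\mathbb{C}\setminus\Gamma$. For uniqueness, given two such solutions $u_1,u_2$, the difference $w:=u_1-u_2$ satisfies a linear second-order ODE on the simply connected domain $\mathbb{C}\setminus\Gamma$ (the difference of the two copies of the equation, linearised along the segment from $u_2$ to $u_1$) and is $o(x^{-n})$ for every $n$ there; near a non-Stokes ray $d\subset\mathbb{C}\setminus\Gamma$ the bound $o(x^{-\infty})$ forces $w$ to be a scalar multiple of the recessive solution of that ODE, but crossing the adjacent Stokes ray — which also lies in $\mathbb{C}\setminus\Gamma$ since the opening of the latter exceeds the Stokes spacing $3\pi/2$ — this solution becomes exponentially large while $w$ remains $o(x^{-\infty})$, forcing the multiple, hence $w$, to vanish. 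Finally $x^{1/3}\mapsto\omega x^{1/3}$ with $\omega^3=1$ permutes the three branches, so the statement holds for every branch of $x^{1/3}$.

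I expect the main obstacle to be the uniform control of the Green's matrix $G(x,s)$ right up to the edges of the maximal sectors $S^{(m)}_k$, where $\re\int\lambda$ is small and the exponential dichotomy nearly degenerates: one must route the integrations along steepest-descent-type contours for $\re\int\lambda$ that stay inside $S^{(m)}_k$ and exploit that, although $\lambda(x)\to0$, it does so only like $x^{-1/3}$, so that $\int|\lambda|$ still diverges and the recessive mode genuinely (if slowly) decays. A secondary difficulty is the ``beyond all orders'' step in Part 2 — upgrading $w=o(x^{-\infty})$ to an exact identification of $w$ with a multiple of the recessive solution — which requires running the same estimates on the nonautonomous linear ODE satisfied by $w$ rather than on its constant-coefficient model.
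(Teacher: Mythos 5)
Your overall architecture coincides with the paper's: rewrite $\textrm{P}_{\textrm{III}}^{(\textrm{ii})}$ as a first-order polynomial system, extract the formal series with $(a_0^{(m)})^3=1$ and the recurrence \eqref{p3-2-recur}, identify the exponentials $\exp\bigl(\pm\tfrac{3\sqrt{3}}{2}e^{-2m\pi i/3}x^{2/3}\bigr)$ that govern perturbations (your $\tfrac32 c_m x^{2/3}$ with $c_m^2=3a_0^{(m)}$ is the same quantity), read off the anti-Stokes rays spaced $3\pi/2$ apart as the boundaries of the maximal sectors, and prove uniqueness on $\mathbb{C}\setminus\Gamma$ by noting that the difference of two such solutions is $o(x^{-j})$ for all $j$ yet asymptotic to a combination of these exponentials, each of which is dominant somewhere in a domain of opening $2\pi>3\pi/2$. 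Where you genuinely differ is the existence mechanism: the paper substitutes $y=x^{1/3}$ to put the system into Wasow's standard form with $q=1$ and then cites Wasow's Theorems 12.1 and 12.3 as black boxes, whereas you reprove existence by truncating the formal series and running a contraction on an integral equation built from the Green's matrix of the exponential dichotomy. Your route is self-contained and closer to Joshi--Kitaev's original treatment of $\mathrm{P}_{\mathrm{I}}$, at the cost of the contour and Green's-matrix estimates near the sector edges that you correctly flag as the main burden; the paper's route is much shorter but leans entirely on Wasow.

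Two points need repair. First, the claim that the coefficients $a_n^{(m)},A_n^{(m)}$ ``grow at most geometrically'' is false: these series are divergent (that divergence is precisely the source of the exponentially small free parameter), and the coefficients grow factorially; nothing in your construction uses the claim, so it should simply be deleted. Second, in Part 2 your existence step asserts that the distinguished $C=0$ solution ``can only pick up a term proportional to the exponential recessive on the far side'' when continued across an anti-Stokes ray. That is exactly the statement to be proved, and it is false for every other member of the one-parameter family, since $u_C-u_0\sim C\exp\bigl(-\tfrac32 c_m x^{2/3}\bigr)$ becomes dominant once the bounding ray is crossed. The correct mechanism --- which the paper uses and which your own uniqueness argument already contains --- is to rotate the maximal sectors by a small $\varepsilon$ so that consecutive ones overlap in a thin sector containing the shared anti-Stokes ray, construct a solution with the expansion in each, and observe that their difference is $o(x^{-j})$ for all $j$ on the overlap while being asymptotic to a combination of exponentials that are merely oscillatory (with algebraic prefactors) on that ray; hence the difference vanishes and the sectorial solutions glue. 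Chaining this around $3\pi>2\pi$ worth of sectors produces the solution on $\mathbb{C}\setminus\Gamma$, after which your uniqueness argument applies verbatim.
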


\begin{rmk}
  In the literature,  \eqref{PIII Case 2} is also called the $\textrm{P}_{\textrm{III}}$ of type $D_7$
  according to the algebro-geometric classification scheme in \cite{Ohy:Kaw:Sak:Oka}. It is also called the degenerate $\textrm{P}_{\textrm{III}}$
  by Kitaev and Vartanian in \cite{Kit:Var2004,Kit:Var2010}.
  % Removed 'the $\textrm{P}_{\textrm{III}}^{(\textrm{ii})}$ equation' after 'In the literature.'
\end{rmk}

\begin{rmk} \label{rmk-kv1}
  Kitaev and Vartanian studied the connection formulas for the $\textrm{P}_{\textrm{III}}^{(\textrm{ii})}$ equation in \cite{Kit:Var2004,Kit:Var2010}. By using the isomonodromy deformation method, they obtained asymptotics for solutions $u(x)$ when $x \to \pm \infty$ and $x \to \pm i \infty$.
  % Removed '$x$ belongs to the real and imaginary axes and tends to infinity, that is,' after $u(x)$.
  These solutions are similar to the one-parameter solutions in the first part of our Theorem~\ref{thm-p3-2}. Their asymptotic formulas involve a term $x^{1/3}$ and a trigonometric term.
  See for example formulas (39), (41) and (42) in \cite{Kit:Var2004}.
  % Changed ', for example see' into '. See for example'.
\end{rmk}

\begin{rmk}
  In \cite{Kit:Var2010}, Kitaev and Vartanian obtained significant asymptotic formulas for $u(x)$ which are valid in the neighbourhood of poles.
  % Removed 'another' after 'obtained.'
  In addition, they got asymptotics for these poles, see Theorem~2.1-2.3 in \cite{Kit:Var2010}. Note that these poles are located on the real or imaginary axis depending on different monodromy data.
  %This agrees with our results about the validity sectors in the first part of Theorem \ref{thm-p3-2}.
\end{rmk}

We obtain the following results for $\textrm{P}_{\textrm{IV}}$:
% Changed 'Similarly, we' into 'We' and '$\textrm{P}_{\textrm{IV}}$.' into '$\textrm{P}_{\textrm{IV}}$:'
   \begin{thm}\label{p4-thm}
  For $k=0,1,2,3$ and $x_0 \neq 0$, the following statements hold true:
  % Changed 'we have' into 'the following statements hold true:'
  \begin{itemize}
    \item[1. ] There exist one-parameter solutions of $\textnormal{P}_{\textnormal{IV}}$ in the regions
        % Changed 'respective regions' into 'regions'.
\[
 S_{k}^{(m)} =\left\{
\begin{aligned}
          &\left\{x\in\mathbb{C} \big | |x|>|x_0|, \frac{1}{2}k\pi<\arg{x}<\frac{1}{2}\pi+\frac{1}{2}k\pi\right\}, && \mbox{for } m=1,
          \\
         &\left\{x\in\mathbb{C}\big | |x|>|x_0|,-\frac{1}{4}\pi+\frac{1}{2}k\pi<\arg{x}<\frac{1}{4}\pi+\frac{1}{2}k\pi\right\},
         && \mbox{for } m=2,3,4,
        \end{aligned}
        \right.
\]
with the following asymptotic expansion
\begin{equation}\label{p4-expan}
u(x)\sim  \left\{
\begin{aligned}
	&x\sum\limits_{n=0}^{\infty}a_{n}^{(m)}x^{-2n},
	 && \mbox{for } m=1,2,
	\\
	&\frac1x \sum\limits_{n=0}^{\infty}a_{n}^{(m)}x^{-2n},
	 && \mbox{for } m=3,4,
\end{aligned}
\right.
\qquad \mbox{as } |x|\to \infty,
\end{equation}
Here $a_0^{(1)}=-\frac23$, $a_0^{(2)}=-2$, $a_0^{(3)}=\k$, $a_0^{(4)}=-\k$ with $\a=-\k+2\K+1$ and
$\b=-2\k^2$, and
the subsequent coefficients $a_n^{(m)}$ can be determined by the following recurrence relations:
if $m=1$,
  \begin{equation}\label{p4-recurrence-1}
	\begin{cases}
      \frac{2}{3}a_{n+1}^{(1)}-\frac{8}{3}A_{n+1}^{(1)}=(1-2n)a_{n}^{(1)}-\sum\limits_{k=1}^{n}a_{k}^{(1)}(4A_{n+1-k}^{(1)}-a_{n+1-k}^{(1)}), \\
      \frac{2}{3}A_{n+1}^{(1)}-\frac{2}{3}a_{n+1}^{(1)}=(2n-1)A_{n}^{(1)}+2\sum\limits_{k=1}^{n}A_{k}^{(1)}(a_{n+1-k}^{(1)}-A_{n+1-k}^{(1)})
    \end{cases}
\end{equation}
with $a_{1}^{(1)}=\a$, $A_{0}^{(1)}=\frac13$, $A_{1}^{(1)}=\frac12-\k+\frac12 \K$;
if $m=2$,
  \begin{equation}\label{p4-recurrence-2}
\begin{cases}
      a_{n+1}^{(2)}=(\frac12-n)a_{n}^{(2)}+4 A_n+\frac12\sum\limits_{k=1}^{n}a_{k}^{(2)}
      (a_{n+1-k}^{(2)}- 4A_{n-k}^{(2)}), \quad n\geq 1, \\
      A_{n+1}^{(2)}=(\frac12+n)A_{n}^{(2)}+\sum\limits_{k=0}^{n}A_{k}^{(2)}
      (a_{n+1-k}^{(2)}-A_{n-k}^{(2)}), \qquad \qquad \  n\geq 0
    \end{cases}
\end{equation}
with $a_{1}^{(2)}=-\a$, $A_{0}^{(2)}=-\frac12\K$; if $m=3$,
  \begin{equation}\label{p4-recurrence-3}
	\begin{cases}
          a_{n+1}^{(3)}=-(n+\frac12)a_{n}^{(3)}+\frac12\sum\limits_{k=0}^{n}a_{k}^{(3)}(a_{n-k}^{(3)}
          -4A_{n+1-k}^{(3)}), \qquad \   n\geq 0, \\
     A_{n+1}^{(3)}=(n-\frac12)A_{n}^{(3)}+a_n^{(3)}-\sum\limits_{k=1}^{n}A_{k}^{(3)}
     (A_{n+1-k}^{(3)}-a_{n-k}^{(3)}), \quad\  n\geq 1
    \end{cases}
\end{equation}
with  $A_{0}^{(3)}=1$, $A_{1}^{(3)}=-\frac12 (1-2\k+\K)$; and if $m=4$,
  \begin{equation}\label{p4-recurrence-4}
	\begin{cases}
       a_{n+1}^{(4)}=(n+\frac12)a_{n}^{(4)}-\frac12\sum\limits_{k=0}^{n}a_{k}^{(4)}(a_{n-k}^{(4)}-4A_{n-k}^{(4)}), \\
     A_{n+1}^{(4)}=-(n+\frac12)A_{n}^{(4)}+\sum\limits_{k=0}^{n}A_{k}^{(4)}(A_{n-k}^{(4)}-a_{n-k}^{(4)})
    \end{cases}
\end{equation}
with  $A_{0}^{(4)}=\frac12\K$.

    \item[2. ] There exist a unique solution
    of $\textnormal{P}_{\textnormal{IV}}$ whose asymptotic behaviour as $|x|\to \infty$ is given by
   \eqref{p4-expan} respectively in the regions
\[
  \Omega_{k}^{(m)} =\left\{
\begin{aligned}
         &\Big\{x\in\mathbb{C}\big | |x|>|x_0|, k\pi<\arg{x}<(k+2)\pi\Big\},
          && \mbox{for } m=1,
         \\
        &\left\{x\in\mathbb{C}\big | |x|>|x_0|, -\frac{3}{4}\pi+k\pi<\arg{x}<\frac{5}{4}\pi+k\pi\right\},
        && \mbox{for } m=2,3,4.
        \end{aligned}
\right.
\]
    \end{itemize}
\end{thm}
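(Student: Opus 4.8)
\medskip
\noindent\textbf{Proof plan for Theorem~\ref{p4-thm}.}
The argument follows Joshi and Kitaev and runs parallel to the $\textrm{P}_{\textrm{III}}$ cases treated above. First I would rewrite $\textrm{P}_{\textrm{IV}}$ as an equivalent first-order $2\times2$ system $\frac{d}{dx}(y_1,y_2)^{T}=F(x,y_1,y_2)$ with rational right-hand side (this is the reduction mentioned in Remark~\ref{rmk-anAn}; the second component $y_2$ is what produces the auxiliary coefficients $\{A_n^{(m)}\}$). For each $m=1,2,3,4$ I would insert the leading ansatz dictated by \eqref{p4-expan} --- $u=a_0^{(m)}x+\cdots$ for $m=1,2$ and $u=a_0^{(m)}x^{-1}+\cdots$ for $m=3,4$, together with the matching ansatz for $y_2$ --- and verify that the recurrences \eqref{p4-recurrence-1}--\eqref{p4-recurrence-4} determine a unique formal series solution (the coefficient matrices occurring on their left-hand sides are nonsingular at every step). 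Truncating this series at order $N$ and writing $(y_1,y_2)=(\text{$N$-th partial sum})+(r,\rho)$, I obtain a perturbed linear system
\begin{equation}
\frac{d}{dx}\begin{pmatrix}r\\ \rho\end{pmatrix}=L(x)\begin{pmatrix}r\\ \rho\end{pmatrix}+g(x)+Q(x,r,\rho),\nonumber
\end{equation}
where $g(x)=O(|x|^{-K})$ with $K=K(N)\to\infty$ as $N\to\infty$, and $Q$ is at least quadratic in $(r,\rho)$ with coefficients bounded on the relevant sector.

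The next step is the homogeneous part $w'=L(x)w$. One checks that $L(x)$ is rational in $x$ with $L(x)=xL_0+O(|x|^{-1})$ as $|x|\to\infty$, where the eigenvalues of $L_0$ are $\pm\mu^{(m)}$ with $\mu^{(2)},\mu^{(3)},\mu^{(4)}$ real and positive and $\mu^{(1)}$ purely imaginary; accordingly the system has two independent solutions behaving like $\exp\bigl(\pm\tfrac12\mu^{(m)}x^{2}(1+o(1))\bigr)$ along rays. A bounded gauge transformation $w\mapsto P(x)^{-1}w$ with $P,P^{-1}$ bounded diagonalises the system to $\tilde w'=\mathrm{diag}(\lambda_1,\lambda_2)\tilde w+\tilde g+\tilde Q$, and I set $\Lambda_j(x):=\int^{x}\lambda_j$, so that $\Lambda_1-\Lambda_2\sim\mu^{(m)}x^{2}$. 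The precise sector openings in the statement are exactly what makes this effective: each $S_k^{(m)}$ has opening $\pi/2$ and, being an open sector squeezed between two consecutive anti-Stokes lines $\re\bigl(\mu^{(m)}x^{2}\bigr)=0$, carries one recessive mode (say $e^{\Lambda_2}$, with $\re\Lambda_2\to-\infty$) and one dominant mode throughout; each $\Omega_k^{(m)}$ has opening $2\pi$, so as $\arg x$ increases the dominant and recessive modes interchange across every anti-Stokes line.

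For Part~1 I would recast the problem as a fixed point $\tilde w=\mathcal{T}_C(\tilde w)$: the dominant component $\tilde w_1$ is recovered by integrating the inhomogeneity from $\infty$ along a suitable path (killing the growing homogeneous mode), while the recessive component $\tilde w_2$ is obtained by integrating from a base point $x_0$ on a boundary ray plus a free term $C\,e^{\Lambda_2(x)}$ --- this constant $C$ is the parameter of the family. On the Banach space $X=\{w\ \text{analytic on}\ S_k^{(m)}:\ \|w\|:=\sup|x|^{p}|w(x)|<\infty\}$ with $p$ taken large (permissible since $K\to\infty$ with $N$), the elementary kernel estimate $\int|e^{\Lambda_j(x)-\Lambda_j(t)}|\,|t|^{-q}\,|dt|=O(|x|^{-q})$ along the chosen paths (where $\re(\Lambda_j(x)-\Lambda_j(t))\le0$) shows that $\mathcal{T}_C$ maps a small ball of $X$ into itself and is a contraction once $|x_0|$ is large; the contraction mapping theorem then produces a solution with $u-(\text{$N$-th partial sum})=O(|x|^{-p})$, and letting $N$ (hence $p$) grow gives the asymptotic expansion \eqref{p4-expan}. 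Varying $C$ alters the solution by $O(e^{\Lambda_2})$, which is exponentially small in $S_k^{(m)}$, so the whole family has the same asymptotic series: this is the claimed one-parameter family. For Part~2, a solution enjoying \eqref{p4-expan} with bounded remainder on all of $\Omega_k^{(m)}$ cannot contain a nonzero term $C\,e^{\Lambda_2}$, since the interchange of modes across the anti-Stokes lines inside $\Omega_k^{(m)}$ would make such a term unbounded on part of $\Omega_k^{(m)}$; hence $C=0$ is forced, both components are given by parameter-free integral formulas, and repeating the contraction argument --- now on functions analytic on all of $\Omega_k^{(m)}$ --- yields existence and uniqueness.

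The main obstacle will be the case-by-case analysis of the linear part $L(x)$: producing the gauge transformation $P(x)$ and checking that the resulting Stokes geometry matches precisely the sector openings in the statement. This is most delicate (i) for $m=3,4$, where $u\to0$ and the term $\beta/u$ in $\textrm{P}_{\textrm{IV}}$ makes the linearisation about $u=a_0^{(m)}x^{-1}$ singular, so one must also keep $(r,\rho)$ small enough that $u$ stays bounded away from $0$; and (ii) for $m=1$, where $\mu^{(1)}$ is purely imaginary, so the recessive/dominant split degenerates on the real axis and one genuinely needs the open complex sector. The accompanying bookkeeping --- deforming the integration contours inside the open sectors so that $\re(\Lambda_j(x)-\Lambda_j(t))$ keeps the right sign, and carrying the $O(|x|^{-1})$ corrections to the $\lambda_j$ through the estimates --- is routine but forms the technical core of the argument.
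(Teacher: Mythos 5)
Your outline is correct, and its skeleton --- an equivalent first-order system, formal series for both components (producing the $\{A_n^{(m)}\}$), linearisation about the formal solution, recessive/dominant exponentials $e^{\pm\mu^{(m)}x^2/2}$ with $\mu^{(1)}$ purely imaginary and $\mu^{(2)},\mu^{(3)},\mu^{(4)}$ real, sub-sectors of opening $\pi/2$ bounded by the anti-Stokes lines $\re\bigl(\mu^{(m)}x^2\bigr)=0$, and the mode-interchange argument that forces the free constant to vanish on the $2\pi$-sectors $\Omega_k^{(m)}$ --- coincides with the paper's. The route through the technical core is genuinely different, however. The paper builds no integral equation and no contraction: existence of a true solution asymptotic to each formal series in each sub-sector is obtained by putting the Hamiltonian system \eqref{p4-first-order} into the normal form \eqref{wasow model eqn} with $q=1$ and citing Wasow's Theorem~\ref{Wasow Thm} (the limiting Jacobians $J_1,\dots,J_4$ have eigenvalues $\pm\tfrac{2\sqrt3}{3}i$ and $\pm2$), while the one-parameter freedom in Part~1 and the uniqueness in Part~2 both come from applying Wasow's Theorem 12.3 to the linear system satisfied by a small perturbation $(\widehat v,\widehat V)$, respectively by the difference of two solutions on overlapping rotated sub-sectors, which yields $\widehat v\sim c_1x^{d_1}e^{\lambda_1 x^2/2}+c_2x^{d_2}e^{\lambda_2x^2/2}$ directly. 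Your hand-built fixed-point scheme essentially re-proves those two theorems of Wasow in this special case (closer to what Joshi and Kitaev do for $\textrm{P}_{\textrm{I}}$); it is self-contained and makes the Stokes geometry explicit, at the cost of the kernel estimates and contour bookkeeping you flag. One point deserves emphasis for your plan: the worry you raise in item (i) about the $\beta/u$ singularity for $m=3,4$ disappears entirely if the first-order system you start from is \emph{polynomial} in $(u,U)$ --- which is exactly what the Hamiltonian system \eqref{p4-first-order} provides, after the rescaling $u=x^{-1}(v+a_0^{(m)})$, $U=x^{-1}(V+A_0^{(m)})$ --- whereas with a merely rational right-hand side the smallness of $u$ would genuinely obstruct your contraction estimates. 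Choosing the polynomial system is the paper's key preparatory step, and you should make it yours as well.
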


\begin{rmk}
  Its and Kapaev applied isomonodromy and Riemann-Hilbert methods to obtain the asymptotics for the Clarkson-McLeod solution solutions $u(x)$ as $x \to - \infty$.
  % Removed 'Like in Remark \ref{rmk-kv1}, '.
  This solution is also similar to the one-parameter solutions in the first part of our Theorem \ref{p4-thm}. The asymptotic formula involves a term $-\frac{2}{3}x$ and a cosine term, see \cite[eq. (1.10)]{Its:Kap1998}.
\end{rmk}

To prove the above theorems, we need the following theorem by Wasow (see \cite[Theorem
12.1]{Wasow}).
% Changed '\cite[Theorem 12.1]{Wasow}' into '(see \cite[Theorem 12.1]{Wasow})'.

\begin{thm}\label{Wasow Thm}

Let $S$ be an open sector of the complex $x$-plane with vertex at
the origin and a positive central angle not exceeding $\pi/(q+1)$
$($q a nonnegative integer$)$. Let $f(x,w)$ be an $N$-dimensional
vector function of $x$ and an $N$-dimensional vector $w$ with the
following properties:
% Changed 'the $N$-dimensional vector $w$' into 'an $N$-dimensional vector $w$' and changed 'properties.' into 'properties:'
\begin{enumerate}

\item $f(x,w)$ is a polynomial in the components $w_j$ of $w$,
$j=1,\cdots,N$, with coefficients that are holomorphic in $x$ in the
region $
0<x_0\leq|x|<\infty,$ $x \in S$, where $x_0$ is a constant.

\item The coefficients of the polynomial $f(x,w)$ have asymptotic
series in powers of $x^{-1}$, as $x\rightarrow\infty$, in $S$.

\item If $f_{j}(x,w)$ denotes the components of $f(x,w)$ then all
the eigenvalues $\lambda_{j}$, $j=1,2,\cdots,N$ of the Jacobian
matrix
\begin{equation}
\Big\{ \lim_{x\rightarrow\infty, x\in S} \Big(\frac{\partial
f_{j}}{\partial w_{k}} \Big\vert_{w=0} \Big)\Big\}
\end{equation}
are different from zero.

\item The differential equation
\begin{equation}\label{wasow model eqn}
x^{-q} w'=f(x,w)
\end{equation}
is formally satisfied by a power series of the form $ \D\sum_{n=1}^{\infty}a_n x^{-n}.$

\end{enumerate}

Then there exists, for sufficiently large $x$ in $S$, a solution
$w=\phi(x)$ of \eqref{wasow model eqn} such that, in every proper
subsector of $S$,
\begin{equation}
\phi(x)\sim\sum_{n=1}^{\infty} a_n x^{-n}, \qquad x\to\infty.
\end{equation}
\end{thm}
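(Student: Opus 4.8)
The plan is to prove this by the classical reduction-to-remainder together with a contraction-mapping argument. The formal solution $\hat w(x)=\sum_{n\ge 1}a_n x^{-n}$ is handed to us by hypothesis (4), so the real work is to manufacture a genuine holomorphic solution asymptotic to it. First I would fix a large integer $N$ and substitute $w=s_N(x)+v$, where $s_N(x)=\sum_{n=1}^{N}a_n x^{-n}$ is the truncation of the formal series. Applying Taylor's formula to $f$ about $s_N$, the equation for $v$ becomes
\[
x^{-q}v' = A(x)\,v + r_N(x) + h(x,v),
\]
where, by (1)--(2), $A(x)=\frac{\partial f}{\partial w}(x,s_N(x))$ tends to the constant matrix $\Lambda_0=\lim_{x\to\infty}\frac{\partial f}{\partial w}(x,0)$; the defect $r_N(x)=f(x,s_N(x))-x^{-q}s_N'(x)$ is $O(x^{-N-1})$ because $\hat w$ satisfies the equation formally; and $h(x,v)$ collects the terms of order $\ge 2$ in $v$, so that $h(x,0)=0$, $\partial_v h(x,0)=0$, with a Lipschitz constant that is small for small $|v|$ and large $|x|$.

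Second, I would normalise the linear part. By a constant linear change of variables I bring $\Lambda_0$ to upper-triangular (Schur) form with diagonal entries $\lambda_1,\dots,\lambda_N$, all nonzero by hypothesis (3). The unperturbed diagonal system $v_j'=\lambda_j x^{q}v_j$ is solved by $E_j(x)=\exp\!\big(\lambda_j x^{q+1}/(q+1)\big)$, and it is exactly the nonvanishing of $\lambda_j$ that makes these genuine exponential factors rather than algebraic ones. Using variation of parameters, I would recast the problem as an integral equation $v=\mathcal T v$ whose $j$-th component integrates $x^{q}E_j(x)E_j(t)^{-1}\big[r_N(t)+h(t,v(t))\big]_j$ along a path chosen separately for each component; the residual coupling from $A(x)-\Lambda_0$ and from the constant off-diagonal Schur entries is carried along and estimated afterwards.

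Third — and this is the crux — the integration paths must be arranged so that the exponential kernels $E_j(x)E_j(t)^{-1}$ stay bounded, indeed non-increasing, uniformly over a proper subsector. The real part of $\lambda_j x^{q+1}$, as a function of $\arg x$, changes sign across Stokes rays spaced exactly $\pi/(q+1)$ apart, so the hypothesis that the central angle does not exceed $\pi/(q+1)$ is precisely what permits, for every $j$, a consistent choice of lower limit — a ray to infinity on the side where $\re\big(\lambda_j t^{q+1}\big)$ has the dominant sign, and the finite boundary arc $|t|=|x_0|$ otherwise — guaranteeing $|E_j(x)E_j(t)^{-1}|\le 1$ for all $x$ in the subsector and all $t$ on its path. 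I expect this geometric bookkeeping of sign regions and admissible paths to be the main obstacle, since one must secure the kernel bound simultaneously for all eigenvalue directions inside a single sector, with at most one Stokes ray of each $\lambda_j$ crossing it.

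Finally, equipped with the kernel bound, I would work in the Banach space of functions holomorphic on the subsector with finite weighted norm $\sup_x|x^{N+1}v(x)|$ and show that $\mathcal T$ maps a small ball into itself and is a contraction there for $|x_0|$ sufficiently large: the self-map property uses the decay $r_N=O(x^{-N-1})$ together with the kernel bound (which preserves the power order under integration), while contractivity uses the factor $O(x^{-1})$ from $A(x)-\Lambda_0$ and the quadratic vanishing of $h$. The Banach fixed-point theorem then yields $v=v_N$ with $v_N(x)=O(x^{-N-1})$, so $w=s_N+v_N$ reproduces the first $N$ coefficients of $\hat w$. A standard argument shows that the solutions obtained for different large $N$ share the same recessive part fixed by the integration-from-infinity scheme and hence coincide; denoting the common solution by $\phi$, the relation $\phi-s_N=O(x^{-N-1})$ for every large $N$ is exactly the statement that $\phi(x)\sim\sum_{n\ge 1}a_n x^{-n}$ in every proper subsector of $S$, as claimed.
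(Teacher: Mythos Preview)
Your outline is a faithful sketch of the classical proof of this existence theorem --- truncate the formal series, linearise about the truncation, diagonalise the leading matrix, convert to an integral equation with the exponential kernels $\exp\big(\lambda_j x^{q+1}/(q+1)\big)$, choose integration contours according to the sign of $\re(\lambda_j x^{q+1})$, and close with a contraction-mapping argument in a weighted sup-norm. The identification of the sector-width hypothesis $\pi/(q+1)$ with the spacing of Stokes rays is exactly the right geometric observation. One small notational slip: you use $N$ both for the dimension of the system (as in the statement) and for the truncation order; these should be kept distinct.

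That said, there is no proof in the paper to compare your argument against. The authors do not prove this theorem; they quote it verbatim from Wasow's monograph (Theorem~12.1 of \cite{Wasow}) and use it as a black box in the proofs of Propositions~2, 4 and 6. So while your proposal is a reasonable reconstruction of Wasow's own argument, within the paper the ``proof'' consists solely of the citation.
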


Note that in Wasow's theorem, the function $f(x,w)$ in \eqref{wasow model eqn} needs to be  polynomial in terms of the components of the unknown function $w$.
% Changed 'needs to be a polynomial in the unknown function $w$' into 'needs to be  polynomial in terms of the components of the unknown function $w$'.
However, there are $\frac{1}{u}$ terms in both $\textrm{P}_{\textrm{III}}$ and $\textrm{P}_{\textrm{IV}}$ equations in \eqref{Painleve III} and \eqref{Painleve IV}. This means we cannot directly apply Wasow's theorem like the $\textrm{P}_{\textrm{I}}$ and $\textrm{P}_{\textrm{II}}$ cases in \cite{Joshi2001,Joshi2003-2}.
% Changed 'can not' into 'cannot'.
 Fortunately, there exist first order differential systems for Painlev\'e equations where all unknown functions are given in polynomial terms.
 % Changed 'we notice that there are some' into 'there exist'.
 % Removed 'in the literature'.
 We will consequently use these first order differential systems to derive our results.
 % Changed 'Then, we can start from' into 'We will consequently use'.
 This also explains Remark \ref{rmk-anAn}, where $\{A_n^{(m)}\}$ are actually coefficients for the other solution in the systems.

In the remaining part of this paper, we will prove Theorems 1-3 in Sections 2-4, respectively.
% Changed 'prove our' into 'prove'.
Because Sections 2-4 are self-contained,
we shall use the same notation for the functions and variables but it will mean different things in different sections. We hope
that this will not cause any confusion.

%%%%%%%%%%%%%%%%%%%%%%%%%%%%%%%%%%%%%%%%%%%%%%%%%%%%%%%%%%%%%%%%%%%%%%%%%%%%%%%%%%%%%%%%

\section{The $\textrm{P}_{\textrm{III}}^{(\textrm{i})}$ equations} \label{sec-p3-1}

The $\textrm{P}_{\textrm{III}}$ equations \eqref{Painleve III} can be written as the following system of first order differential equations:
% Changed 'first order differential system' into 'system of first order differential equations:'.
\begin{align}\label{p3-first-order}
\begin{cases}
  x\D\frac{du}{dx} = (-\delta)^{1/2} x+ h u+xu^{2}U,  \vspace{2mm} \\
  x\D\frac{dU}{dx}=\alpha+\gamma x u-(1+h)U-xuU^{2},
  \end{cases}
\end{align}
where $h = 1 - \beta (-\delta)^{-1/2}$ (see \cite[p.149]{Gromak-book}). That is, eliminating $U$ from the above equations gives us \eqref{Painleve III}.
% Changed '; see \cite[p.149]{Gromak-book}' into '(see \cite[p.149]{Gromak-book})'.
For case (i), $\gamma =1$, $\delta = -1$ and the first order equations are
% Changed 'For case (i) $\gamma =1$ and $\delta = -1$, the first order equations are' into 'For case (i), $\gamma =1$ and $\delta = -1$ and the first order equations are'.
\begin{align}\label{p3-1-first-order}
\begin{cases}
  x\D\frac{du}{dx} =  x+ (1-\beta) u+xu^{2}U,  \vspace{2mm} \\
  x\D\frac{dU}{dx}=\alpha+  x u-(2-\beta)U-xuU^{2}.
  \end{cases}
\end{align}
Then we get the formal solutions to the above system.

\begin{prop} \label{prop-p3-1-formal}
  We have the following formal solutions for \eqref{p3-1-first-order}
  \begin{equation} \label{p3-1-formal}
    u_{f}^{(m)}(x)=\sum\limits_{n=0}^\infty a_{n}^{(m)}x^{-n} \quad \textrm{and} \quad U_{f}^{(m)}(x)=\sum\limits_{n=0}^\infty A_{n}^{(m)}x^{-n},
  \end{equation}
  where
  \begin{equation} \label{p3-1-initial}
    a_0^{(m)} = e^{\frac{m\pi i}{2}} \quad \textrm{and} \quad A_0^{(m)}= - (a_0^{(m)})^2, \qquad m=0,1,2,3,
  \end{equation}
  and the subsequent coefficients are determined by \eqref{p3-1-recur}.
\end{prop}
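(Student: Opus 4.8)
The plan is to substitute the two formal power series $u_f^{(m)}(x)=\sum_{n\ge0}a_n^{(m)}x^{-n}$ and $U_f^{(m)}(x)=\sum_{n\ge0}A_n^{(m)}x^{-n}$ into the first order system~\eqref{p3-1-first-order} and match coefficients of like powers of $x$. First I would compute $x\,\frac{d}{dx}x^{-n}=-n\,x^{-n}$, so the left-hand sides produce $-\sum_{n\ge0}n\,a_n^{(m)}x^{-n}$ and $-\sum_{n\ge0}n\,A_n^{(m)}x^{-n}$. On the right-hand side, the term $xu^2U$ in the first equation expands as $x\bigl(\sum a_l x^{-l}\bigr)\bigl(\sum a_j x^{-j}\bigr)\bigl(\sum A_i x^{-i}\bigr)$, which after the Cauchy product and the factor $x$ contributes $x^{-n}$-coefficients of the form $\sum_{i+j+l=n+1}a_l a_j A_i$; similarly $xuU^2$ in the second equation contributes $\sum_{i+j+l=n+1}a_l A_i A_j$, and the terms $x$ and $xu$ (resp.\ $\alpha$ and $xu$) are handled directly. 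Collecting the coefficient of $x^{-n}$ then gives, for each $n$, a pair of algebraic equations relating $a_{n+1}^{(m)}$, $A_{n+1}^{(m)}$ to the lower-order coefficients $a_0,\dots,a_n$, $A_0,\dots,A_n$; separating out the two terms in which the index $n+1$ actually appears (namely the $l=n+1$ or $i=n+1$ pieces of the triple sums, together with the $a_n$, $A_n$ pieces coming from $x\frac{du}{dx}$ etc.) yields exactly the linear system in $(a_{n+1}^{(m)},A_{n+1}^{(m)})$ written in~\eqref{p3-1-recur}.

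Second, the lowest-order balance ($n=0$, i.e.\ the coefficient of $x^{1}$ together with $x^{0}$) must be treated separately: it forces $a_0^{(m)}$ and $A_0^{(m)}$ to satisfy $a_0^2 A_0 + 1 = 0$ and $a_0 + A_0 a_0^2$-type relations coming from the leading $x$ and $xu$ terms. I would check that $a_0^{(m)}=e^{m\pi i/2}$ together with $A_0^{(m)}=-(a_0^{(m)})^2$ (so $A_0^{(m)}=-(\pm1)$ or $\mp i^2$, i.e.\ the four fourth roots of the appropriate sign) are precisely the solutions of this leading-order system, giving the four branches $m=0,1,2,3$. This pins down the admissible initial data~\eqref{p3-1-initial}.

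Third, with $a_0^{(m)}$, $A_0^{(m)}$ fixed, the recurrence~\eqref{p3-1-recur} determines $a_{n+1}^{(m)}$, $A_{n+1}^{(m)}$ uniquely provided the $2\times2$ coefficient matrix of the linear system in $(a_{n+1}^{(m)},A_{n+1}^{(m)})$ is invertible for every $n\ge0$. I expect this nondegeneracy check to be the main (though modest) obstacle: one must verify that the determinant of that matrix — built from the entries $2a_0^{(m)}A_0^{(m)}$ and the cross terms — is nonzero, equivalently that the two eigenvalues of the Jacobian of the system at the formal solution are nonzero, which is the same condition that will later be needed to invoke Wasow's Theorem~\ref{Wasow Thm}. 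Since $a_0^{(m)}A_0^{(m)}=-(a_0^{(m)})^3=-e^{3m\pi i/2}\neq0$, this determinant computation is a short explicit calculation. Once invertibility is established, an easy induction on $n$ shows all coefficients are well defined and unique, completing the construction of the formal solutions.
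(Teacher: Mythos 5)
Your proposal is correct and follows essentially the same route as the paper, whose proof of Proposition~\ref{prop-p3-1-formal} consists of the single line ``Inserting \eqref{p3-1-formal} into \eqref{p3-1-first-order} gives us the results''; you simply spell out the coefficient matching that sentence compresses. The extra details you supply are sound: the leading-order balance indeed forces $(a_0^{(m)})^2A_0^{(m)}=-1$ and $(A_0^{(m)})^2=1$, yielding exactly the four branches $m=0,1,2,3$, and the linear system for $(a_{n+1}^{(m)},A_{n+1}^{(m)})$ is in fact upper triangular with diagonal entries $2a_0^{(m)}A_0^{(m)}=-2(a_0^{(m)})^3\neq 0$, so the inductive well-definedness check you flag goes through immediately.
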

\begin{proof}
  Inserting \eqref{p3-1-formal} into \eqref{p3-1-first-order} gives us the results.
  % Changed 'Substituting' into 'Inserting'.
\end{proof}
\begin{rmk}
  % Changed 'The first-order differential systems' into 'The systems of first order differential equations'.
  The systems of first order differential equations for Painlev\'e equations are not unique.
  For $\textrm{P}_{\textrm{III}}^{(\textrm{i})}$, besides \eqref{p3-1-first-order}, one can get the same results as in the above Proposition by studying the following system obtained from the Hamiltonian $H_{III}$
 \begin{align}\label{hiii}
  \begin{cases}
  x\D\frac{du}{dx}=x\frac{\partial H_{III}}{\partial U}=4u^{2}U-xu^{2}+(1-\beta)u+x,  \vspace{2mm} \\
  x\D\frac{dU}{dx}=-x\frac{\partial H_{III}}{\partial u}=-4uU^{2}+(2ux+\beta-1)U+\frac{1}{4}(2+\alpha-\beta)x,
  \end{cases}
 \end{align}
 where
  \begin{align}
    xH_{III}(x,u,U):=2u^{2}U^{2}-xu^{2}U+(1-\beta)uU+xU-\frac{1}{4}(2+\alpha-\beta)xu.
  \end{align}
  Since \eqref{hiii} is only valid for case (i), where $\gamma = 1$ and $\delta = -1$, it is more convenient to use \eqref{p3-first-order} to consider both cases in the current and next sections.
  % Changed 'only valid' into 'is only valid'.
  % Do we write 'Case' or 'case'?
  % Changed 'Case (i) $\gamma = 1$ and $\delta = -1$' into 'case (i), where $\gamma = 1$ and $\delta = -1$'
\end{rmk}

Although the original $\textrm{P}_{\textrm{III}}^{(\textrm{i})}$ equations \eqref{PIII Case 1} have $\frac{1}{u}$ terms, the right-hand side of the first order system \eqref{p3-1-first-order} is a polynomial in the functions $u$ and $U$.
% Changed 'unknown functions' into 'functions'.
Thus Wasow's theorem \ref{Wasow Thm} is applicable and we have the following results.
% Changed 'Then' into 'Thus'.

\begin{prop}
  In any sector of angle less than $\pi$, there exists a solution $u(x)$ of $\textrm{P}_{\textrm{III}}^{(\textrm{i})}$ whose asymptotic behaviour as $|x| \to \infty$ is given by $u_{f}^{(m)}(x)$ in \eqref{p3-1-formal} with $m=0,1,2,3$.
\end{prop}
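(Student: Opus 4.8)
The plan is to verify the four hypotheses of Wasow's Theorem~\ref{Wasow Thm} for the first-order system \eqref{p3-1-first-order}, after recentering the formal solution at the origin so that the leading term is subtracted off. Concretely, fix $m\in\{0,1,2,3\}$ and write $u = u_f^{(m)}(x) + v_1$, $U = U_f^{(m)}(x) + v_2$; more precisely, to fit Wasow's framework exactly, substitute $u = a_0^{(m)} + v_1$, $U = A_0^{(m)} + v_2$ into \eqref{p3-1-first-order}. Since $a_0^{(m)} = e^{m\pi i/2}$ and $A_0^{(m)} = -(a_0^{(m)})^2$ satisfy the leading-order balance coming from the $x u^2 U$ and $-x u U^2$ terms (note $a_0^{(m)}(1 + A_0^{(m)} a_0^{(m)}) = a_0^{(m)}(1 - (a_0^{(m)})^4) = 0$ when $(a_0^{(m)})^4 = 1$), the resulting system for $w = (v_1, v_2)^T$ has the form $w' = f(x,w)$ with $f$ polynomial in $v_1, v_2$ and with coefficients holomorphic in $x$ for $|x|$ large, admitting asymptotic expansions in powers of $x^{-1}$. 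This checks hypotheses (1) and (2), and takes $q = 0$ so the sector may have central angle up to $\pi$, matching the statement.

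Next I would check hypothesis (4): that $w' = f(x,w)$ is formally satisfied by a series $\sum_{n\ge 1} c_n x^{-n}$. This is exactly Proposition~\ref{prop-p3-1-formal}, which guarantees that $u_f^{(m)}, U_f^{(m)}$ formally solve \eqref{p3-1-first-order}; subtracting the constants $a_0^{(m)}, A_0^{(m)}$ gives a formal series in $x^{-1}$ starting at order $x^{-1}$, so hypothesis (4) holds with $a_n = (a_n^{(m)}, A_n^{(m)})$. The only genuinely substantive point is hypothesis (3): the Jacobian of $f$ at $w = 0$, evaluated in the limit $x\to\infty$, must have nonzero eigenvalues. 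Differentiating the right-hand sides of \eqref{p3-1-first-order} and dividing by $x$ (to put the system in the form $w' = f$), the dominant contribution as $x\to\infty$ comes from the terms $u^2 U$ and $-uU^2$, giving a limiting Jacobian
\[
J = \begin{pmatrix} 2 a_0^{(m)} A_0^{(m)} & (a_0^{(m)})^2 \\ -(A_0^{(m)})^2 & -2 a_0^{(m)} A_0^{(m)} \end{pmatrix}.
\]
Using $A_0^{(m)} = -(a_0^{(m)})^2$ and $(a_0^{(m)})^4 = 1$, one computes $\operatorname{tr} J = 0$ and $\det J = -4(a_0^{(m)})^2(A_0^{(m)})^2 + (a_0^{(m)})^2(A_0^{(m)})^2 = -3(a_0^{(m)}A_0^{(m)})^2 = -3 (a_0^{(m)})^6 = -3(a_0^{(m)})^2 \ne 0$, so the two eigenvalues are $\pm\sqrt{-\det J} = \pm\sqrt{3}\,(a_0^{(m)})$ up to a constant, both nonzero. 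This is the step I expect to require the most care, since it is where the algebraic structure of the system (the specific values $\gamma=1$, $\delta=-1$ forcing the quartic $(a_0^{(m)})^4=1$) is actually used, and an error there would invalidate the application of Wasow's theorem.

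With all four hypotheses verified, Wasow's Theorem~\ref{Wasow Thm} yields, for each $m$ and each open sector $S$ of central angle less than $\pi$ with vertex at the origin, a solution $w = \phi(x)$ of the recentered system, hence a solution $(u,U) = (a_0^{(m)} + \phi_1, A_0^{(m)} + \phi_2)$ of \eqref{p3-1-first-order}, with $u(x) \sim u_f^{(m)}(x)$ and $U(x)\sim U_f^{(m)}(x)$ in every proper subsector of $S$. Finally, eliminating $U$ from \eqref{p3-1-first-order} (as noted right after that system) shows that this $u(x)$ solves $\textrm{P}_{\textrm{III}}^{(\textrm{i})}$, which is the claimed statement; the asymptotic expansion \eqref{p3-1-expan} is precisely $u_f^{(m)}(x)$. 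One should remark that, because the constants $a_0^{(m)}$, $A_0^{(m)}$ are exactly the values forced by the leading balance and the asymptotics hold in every proper subsector of an arbitrary open sector of angle $<\pi$, the solution obtained is defined and pole-free for $|x|$ large in that sector, completing the proof of the proposition.
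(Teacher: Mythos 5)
Your strategy is exactly the paper's: recentre the system \eqref{p3-1-first-order} at the constant leading terms $a_0^{(m)},A_0^{(m)}$, verify Wasow's four hypotheses with $q=0$ (hence sectors of angle up to $\pi$), and recover a solution of $\textrm{P}_{\textrm{III}}^{(\textrm{i})}$ by eliminating $U$. However, the one step you yourself flagged as requiring the most care --- the limiting Jacobian --- is computed incorrectly. After dividing the second equation of \eqref{p3-1-first-order} by $x$, its right-hand side is $\frac{\alpha}{x}+u-\frac{2-\beta}{x}U-uU^{2}$, and the linear term $u$ (coming from $xu$) is of exactly the same order as $-uU^{2}$ when $x\to\infty$; it contributes $+1$ to $\partial f_{2}/\partial v_{1}$. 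The correct lower-left entry of the limiting Jacobian is therefore $1-(A_0^{(m)})^{2}=1-(a_0^{(m)})^{4}=0$, not $-(A_0^{(m)})^{2}=-1$: the paper's matrix \eqref{p3-1-J} is upper triangular with eigenvalues $\pm 2a_0^{(m)}A_0^{(m)}=\mp 2e^{-m\pi i/2}$, whereas your matrix yields eigenvalues $\pm\sqrt{3}\,a_0^{(m)}$ (modulus $\sqrt{3}$ rather than $2$). The same omission of the $xu$ term appears in your leading-order balance, which should read $a_0^{(m)}\bigl(1-(A_0^{(m)})^{2}\bigr)=0$ rather than $a_0^{(m)}\bigl(1+A_0^{(m)}a_0^{(m)}\bigr)=0$.

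For the proposition as stated you are saved by the fact that both sets of eigenvalues are nonzero, so condition 3 of Wasow's theorem is still verified and the existence conclusion goes through; in that narrow sense the proof can be repaired by simply correcting the entry. But the error is not harmless in the context of the paper: the precise values of $\lambda_{1,2}$ control the exponentials $e^{\lambda_i x}$ in the subsequent perturbation analysis, and hence determine the boundaries of the sectors $S^{(m)}_k$ and $\Omega^{(m)}_k$ in Theorem \ref{thm-p3-1}; with your eigenvalues those sectors would come out wrong. Everything else in your argument (hypotheses 1, 2 and 4, the role of Proposition \ref{prop-p3-1-formal}, and the elimination of $U$) matches the paper.
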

\begin{proof}
  To arrive at the standard form \eqref{wasow model eqn} in Wasow's theorem, we introduce $v(x)=u(x)-a_0^{(m)}$ and $V(x)= U(x)-A_0^{(m)}$. Then $(v,V)$ satisfies the following system
  \begin{equation} \label{p3-1-first-order-new}
    \frac{d}{dx} \left(\begin{matrix} v \\ V \end{matrix}\right) = \left( \begin{matrix}
      1+ \frac{1-\beta}{x}(v+a_0^{(m)})+ (v+a_0^{(m)})^{2}(V+A_0^{(m)}) \\ \frac{\alpha}{x} + v+a_0^{(m)}-\frac{2-\beta}{x}(V+A_0^{(m)}) - (v+a_0^{(m)})(V+A_0^{(m)})^{2}
    \end{matrix} \right)
  \end{equation}
  and has a formal expansion given by
  \begin{equation*}
    v_{f}^{(m)}(x)=\sum\limits_{n=1}^\infty a_{n}^{(m)}x^{-n} \quad \textrm{and} \quad V_{f}^{(m)}(x)=\sum\limits_{n=1}^\infty A_{n}^{(m)}x^{-n}.
  \end{equation*}
   It is easy to see that conditions 1, 2 and 4 in Wasow's theorem are satisfied with $q=0$.
   % Changed 'the conditions' into 'conditions'.
   To verify condition 3, let us denote the right-hand side of \eqref{p3-1-first-order-new} by $(f_1, f_2)^T$.
   % Changed 'the condition' into 'condition'.
   Then we have
  \begin{align} \label{p3-1-J}
    \lim\limits_{x\rightarrow\infty}\left.\begin{pmatrix}
      \frac{\partial f_{1}}{\partial v} & \frac{\partial f_{1}}{\partial V} \\
      \frac{\partial f_{2}}{\partial v} & \frac{\partial f_{2}}{\partial V}
    \end{pmatrix}\right|_{v=V=0}=\begin{pmatrix}
      2 a_0^{(m)} A_0^{(m)} & (a_0^{(m)})^{2}\\
      0 & -2a_0^{(m)}A_0^{(m)}
    \end{pmatrix}:= J.
  \end{align}
  The eigenvalues of $J$ are
  \begin{equation} \label{p3-1-J-eig}
    \lambda_{1,2} = \pm 2 a_0^{(m)} A_0^{(m)} = \mp 2 e^{-\frac{m \pi i}{2}} \neq 0, \qquad m = 0,1,2,3;
  \end{equation}
  (see \eqref{p3-1-initial} for the values of $a_0^{(m)}$ and $ A_0^{(m)}$).
  % Changed 'see \eqref{p3-1-initial} for the values of $a_0^{(m)}$ and $ A_0^{(m)}$' into '(see \eqref{p3-1-initial} for the values of $a_0^{(m)}$ and $ A_0^{(m)}$)'
  Thus our proposition follows from Wasow's theorem.
  % Changed 'Then' into 'Thus'.
\end{proof}

The above proposition gives us the existence of solutions to $\textrm{P}_{\textrm{III}}^{(\textrm{i})}$ with specific asymptotic expansions given in \eqref{p3-1-expan}. However, these solutions may not be unique because some exponentially small terms with arbitrary parameters could exist. To understand these solutions better, more detailed analysis about their properties is needed. This will prove our Theorem \ref{thm-p3-1}.

\bigskip

\noindent\emph{Proof of Theorem \ref{thm-p3-1}.} Let $(u_0, U_0)$ be a solution of \eqref{p3-1-first-order} with asymptotic behaviour as given in \eqref{p3-1-formal}.
% Changed 'given' into 'as given'
Consider a perturbation
\begin{equation}
  u=u_0 + \widehat u \quad \textrm{and} \quad U=U_0 + \widehat U, \qquad |\widehat u|, |\widehat U| \ll 1
\end{equation}
of this solution.
% Moved 'of this solution' to the end of the sentence, moved the full stop accordingly and changed '\widehat u, \widehat U' into '|\widehat u|, |\widehat U|'.
Then $\widehat u$ and  $\widehat U$ satisfy
% Removed 'the equation below' after 'satisfy'.
\begin{equation*}
  x\frac{d}{dx} \left(\begin{matrix} \widehat u \\ \widehat U \end{matrix}\right) = \left( \begin{matrix} (1-\beta+2xu_0 U_0) \widehat u + x u_{0}^{2} \widehat U  + 2x u_0 \widehat u \widehat U + xU_0 \widehat u^2 + x \widehat u^2 \widehat U \\
      x(1-U_{0}^{2}) \widehat u + (\beta-2-2xu_{0}U_{0}) \widehat U - 2x U_0 \widehat u \widehat U - x u_0 \widehat U^2 - x \widehat u\widehat U^2
    \end{matrix} \right).
\end{equation*}
Since $|\widehat u|, |\widehat U| \ll 1$, it is sufficient to consider the following linear system to determine the asymptotic behaviour of $\widehat u$ and $\widehat U$
\begin{equation}
  \frac{d}{dx} \left(\begin{matrix} \widehat u \\ \widehat U \end{matrix}\right) =\begin{pmatrix}
      \frac{1-\beta}{x}+2u_0 U_0 & u_{0}^{2} \\
      1-U_{0}^{2} & \frac{\beta-2}{x}-2u_{0}U_{0}
    \end{pmatrix} \left(\begin{matrix} \widehat u \\ \widehat U \end{matrix}\right):= B(x) \left(\begin{matrix} \widehat u \\ \widehat U \end{matrix}\right).
\end{equation}
By \eqref{p3-1-formal}, the coefficient matrix $B(x)$ in the above equation has the following asymptotic behaviour:
% Changed 'have' into 'has' and 'behavior' into 'behaviour:'.
\begin{equation} \label{p3-1-B}
B(x) = J +\mathcal{O}\left(\frac{1}{x}\right), \qquad \textrm{as } |x| \to \infty,
\end{equation}
where $J$ is the matrix in \eqref{p3-1-J} with eigenvalues given in \eqref{p3-1-J-eig}.
% Is this explanation necessary?
So, by Theorem 12.3 in \cite{Wasow},
% Changed 'Then, according to' into 'So, by'.
% Removed 'the fundamental solution of the above equation is given by'.
\begin{equation}
  Y(x) x^D \exp\biggl(x \, \textrm{diag}(\lambda_1, \lambda_2)\biggr),
\end{equation}
where $D$ is a constant diagonal matrix $D= \textrm{diag}(d_1, d_2)$
and $Y(x)$ has an asymptotic expansion
\begin{equation*}
  Y(x) \sim \sum_{r=0}^\infty Y_r x^{-r}, \quad \det Y_0 \neq 0,  \qquad \textrm{as } |x| \to \infty.
\end{equation*}
Consequently, we have
% Changed 'Then' into 'Consequently'.
\begin{equation}
  \widehat u(x) \sim c_1 x^{d_1} e^{\lambda_1 x } + c_2  x^{d_2} e^{\lambda_2 x }, \qquad \textrm{as } |x| \to \infty
\end{equation}
with two arbitrary parameters $c_1$ and $c_2$. To ensure our assumption $|\widehat u| \ll 1$,
% Changed '\widehat u' into '|\widehat u|'.
we need to choose either $c_1 = 0$, or $c_2 = 0$,
% Changed '$c_i = 0$' into '$c_1 = 0$, or $c_2 = 0$'.
or a correct sector such that $\re e^{\lambda_i x } < 0$.
According to the specific values of $\lambda_{1,2}$ in \eqref{p3-1-J-eig}, we prove the first part of our theorem.

To prove the second part of our theorem and achieve uniqueness, we consider solutions in the following sectors
% Changed 'the uniqueness' into 'uniqueness'.
\begin{align*}
      S^{(m)}_{k, \pm\varepsilon} = \begin{cases}
        \biggl\{x\in\mathbb{C}\; |\;|x|>|x_0|, - \frac{\pi}{2} + k \pi \pm\varepsilon <\arg{x}< \frac{\pi}{2} + k \pi \pm\varepsilon \biggr\}, & \textrm{for }m=0,2, \vspace{2mm} \\
        \biggl\{x\in\mathbb{C}\; |\; |x|>|x_0|, k \pi\pm\varepsilon <\arg{x}< (k+1) \pi \pm\varepsilon \biggr\}, & \textrm{for }m=1,3
      \end{cases}
\end{align*}
with a small $\varepsilon>0$.
Let $(u_1, U_1)$ and $(u_2, U_2)$ be two solutions of \eqref{p3-1-first-order} whose asymptotic expansions are given by \eqref{p3-1-formal} in sectors $S^{(m)}_{k, \varepsilon}$ and $S^{(m)}_{k+1, -\varepsilon}$, respectively. Consider the difference of these solutions
\begin{equation}
  w:= u_1-u_2 \quad \textrm{and} \quad W:=U_1-U_2,
\end{equation}
which we define in the overlap of the two sectors
% Changed 'are defined' into 'we define'.
\begin{equation}
  \widehat S^{(m)}_{k,\varepsilon} = S^{(m)}_{k, \varepsilon} \cap S^{(m)}_{k+1, -\varepsilon}.
\end{equation}
Since $(u_1, U_1)$ and $(u_2, U_2)$ satisfy the same asymptotic expansion, we have
% Shouldn't these solutions be in parentheses?
\begin{equation} \label{p3-1-w-asy}
  w(x) = O(x^{-j}) \quad \textrm{and} \quad W(x) = O(x^{-j}) \qquad \textrm{as } |x| \to \infty \textrm{ and } x \in \widehat S^{(m)}_{k,\varepsilon}
\end{equation}
for all $j \in \mathbb{N}$. Moreover, one can verify using \eqref{p3-1-first-order} that $w(x)$ and $W(x)$ satisfy the following equations
% Changed 'verify' into 'verify using \eqref{p3-1-first-order}'
\begin{equation}
  \frac{d}{dx} \left(\begin{matrix} w \\  W \end{matrix}\right) =\begin{pmatrix}
      \frac{1-\beta}{x}+ (u_1+u_2) U_1 & u_{2}^{2} \\
      1-U_{2}^{2} & \frac{\beta-2}{x}-u_{1} (U_{1}+U_2)
    \end{pmatrix} \left(\begin{matrix} w \\ W \end{matrix}\right):= \widetilde B(x) \left(\begin{matrix} w \\ W \end{matrix}\right).
\end{equation}
The coefficient matrix $\widetilde B(x)$ satisfies the same asymptotic behaviour as $B(x)$ does in \eqref{p3-1-B}. Then by the same argument as in the first part,
% Changed 'similar analysis' into 'same argument'.
we get
% Changed 'have' into 'get'.
\begin{equation}
  w(x) \sim c_1 x^{d_1} e^{\lambda_1 x } + c_2  x^{d_2} e^{\lambda_2 x }, \qquad \textrm{as } |x| \to \infty.
\end{equation}
To ensure $w(x)$ satisfies the asymptotic behaviour in \eqref{p3-1-w-asy} for $x \in \widehat S^{(m)}_{k,\varepsilon}$, we must choose $c_1=c_2 =0$ in the above formula. This means $u_1 = u_2$ in $\widehat S^{(m)}_{k,\varepsilon}$ and the sector of validity
can be extended to $ \Omega^{(m)}_k$ in \eqref{p3-1-large-sector}. This completes the proof of our results. \hfill $\Box$

\bigskip

%%%%%%%%%%%%%%%%%%%%%%%%%%%%%%%%%%%%%%%%%%%%%%%%%%%%%%%%%%%%%%%%%%%%%%%%%%%%%%%%%%%%%

\section{The $\textrm{P}_{\textrm{III}}^{(\textrm{ii})}$ equations}

For case (ii), where $\alpha=1, \gamma=0$ and $\delta = -1$, the first order equations \eqref{p3-first-order} reduce to the following form:
% Changed 'case (ii) $\alpha=1, \gamma=0$ and $\delta = -1$' into 'case (ii), where $\alpha=1, \gamma=0$ and $\delta = -1$'
% Changed 'form' into 'form:'.
\begin{align}\label{p3-2-first-order}
\begin{cases}
  x\D\frac{du}{dx} =  x+ (1-\beta) u+xu^{2}U,  \vspace{2mm} \\
  x\D\frac{dU}{dx}= 1 -  (2-\beta)U-xuU^{2}.
  \end{cases}
\end{align}
Then we obtain the formal solutions.

\begin{prop} \label{prop-p3-2-formal}
  We have the following formal solutions for \eqref{p3-2-first-order}
  \begin{equation} \label{p3-2-formal}
    u_{f}^{(m)}(x)=x^{\frac{1}{3}}\sum\limits_{n=0}^{\infty} a_{n}^{(m)}x^{-\frac{2n}{3}} \quad \textrm{and} \quad U_{f}^{(m)}(x)=x^{-\frac{2}{3}}\sum\limits_{n=0}^{\infty}A_{n}^{(m)}x^{-\frac{2n}{3}},
  \end{equation}
  where
  \begin{equation} \label{p3-2-initial}
    a_0^{(m)} = e^{\frac{2m\pi i}{3}} \quad \textrm{and} \quad A_0^{(m)}= - a_0^{(m)}, \qquad m=0,1,2,
  \end{equation}
  and the subsequent coefficients are determined by \eqref{p3-2-recur}.
\end{prop}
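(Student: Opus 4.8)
The plan is to prove this by direct substitution: insert the Puiseux-type ansatz \eqref{p3-2-formal} into the system \eqref{p3-2-first-order} and match coefficients of powers of $x^{1/3}$. Throughout one fixes a branch of $x^{1/3}$ and reads \eqref{p3-2-formal} as formal power series in $x^{-2/3}$, multiplied by $x^{1/3}$ and $x^{-2/3}$ respectively; everything below is a formal manipulation, exactly as in Proposition \ref{prop-p3-1-formal}, but with the extra bookkeeping caused by the fractional exponents.

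First I would check that the ansatz is consistent at the level of power classes. Writing $u=\sum_{n\ge 0}a_nx^{\frac13-\frac{2n}{3}}$ and $U=\sum_{n\ge 0}A_nx^{-\frac{2(n+1)}{3}}$, in the first equation of \eqref{p3-2-first-order} the terms $x\,du/dx$ and $(1-\beta)u$ contribute only exponents of the form $\tfrac13-\tfrac{2n}{3}$ $(n\ge 0)$, the term $xu^2U$ contributes exponents $1-\tfrac{2(N+k)}{3}$ $(N,k\ge 0)$, and the inhomogeneity $x$ contributes exponent $1$; thus the only exponent outside the class $\{\tfrac13-\tfrac{2n}{3}\}$ that occurs is $1$, and it is matched purely at leading order, so the first equation closes. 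Similarly, in the second equation $x\,dU/dx$ and $(2-\beta)U$ contribute exponents $-\tfrac{2(n+1)}{3}$, the term $xuU^2$ contributes exponents $-\tfrac{2(j+k+l)}{3}$ $(j,k,l\ge 0)$, and the inhomogeneity $1$ contributes exponent $0$, again matched at leading order. Hence \eqref{p3-2-formal} fits \eqref{p3-2-first-order} at the level of exponent classes.

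Next I would read off the coefficient equations. The $x^1$-coefficient of the first equation gives $1+a_0^2A_0=0$ and the $x^0$-coefficient of the second gives $1-a_0A_0^2=0$; dividing these forces $A_0=-a_0$ and then $a_0^3=1$, i.e. $a_0=e^{2m\pi i/3}$ with $m=0,1,2$ and $A_0=-a_0$, which is \eqref{p3-2-initial}. For $n\ge 0$, the $x^{\frac13-\frac{2n}{3}}$-coefficient of the first equation and the $x^{-\frac{2(n+1)}{3}}$-coefficient of the second each give one linear relation between the unknowns $a_{n+1},A_{n+1}$ and the previously determined coefficients $a_0,\dots,a_n,A_0,\dots,A_n$: in $xu^2U$ the unknown $A_{n+1}$ is paired with the index-$0$ Cauchy coefficient $a_0^2$ of $u^2$ while $a_{n+1}$ enters through the index-$(n+1)$ Cauchy coefficient $\sum_l a_la_{n+1-l}$ paired with $A_0$ (note $1-\tfrac{2(n+1)}{3}=\tfrac13-\tfrac{2n}{3}$), and likewise in $xuU^2$. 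Isolating the unknowns and simplifying with $A_0=-a_0$ and $a_0^3=1$ turns these two relations into exactly the two equations of \eqref{p3-2-recur}. Finally, the coefficient matrix of $(a_{n+1},A_{n+1})$ on the left of \eqref{p3-2-recur} is $[a_0^{(m)}]^2\begin{pmatrix}6 & -3\\3 & -6\end{pmatrix}$, with determinant $-27[a_0^{(m)}]^4\neq 0$, so the recursion determines $a_{n+1}^{(m)}$ and $A_{n+1}^{(m)}$ uniquely for every $n\ge 0$ and the formal solution is well defined.

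I do not expect any step here to be a serious obstacle. In contrast with the $\textrm{P}_{\textrm{III}}^{(\textrm{i})}$ case, where substitution is immediate, the only point requiring care is the fractional-power bookkeeping — verifying that every monomial produced by the Cauchy products lands in one of the two exponent classes above, so that the recurrence \eqref{p3-2-recur} is meaningful and the leading-order equations genuinely force \eqref{p3-2-initial}. Once that is in place, collecting the products and matching coefficients is routine algebra, entirely parallel to the proof of Proposition \ref{prop-p3-1-formal}.
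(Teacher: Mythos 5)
Your proposal is correct and takes exactly the paper's approach: the paper's own proof of this proposition is the single sentence ``Inserting \eqref{p3-2-formal} into \eqref{p3-2-first-order} gives us the results.'' Your write-up merely supplies the fractional-exponent bookkeeping, the leading-order equations $a_0^2A_0=-1$, $a_0A_0^2=1$, and the invertibility of the $2\times 2$ system for $(a_{n+1}^{(m)},A_{n+1}^{(m)})$ that the authors leave implicit.
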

\begin{proof}
  Inserting \eqref{p3-2-formal} into \eqref{p3-2-first-order} gives us the results.
  % Changed 'Substituting' into 'Inserting'.
\end{proof}

Again, applying Wasow's theorem \ref{Wasow Thm}, we get the existence of the solutions to  $\textrm{P}_{\textrm{III}}^{(\textrm{ii})}$ with asymptotic expansions as given in \eqref{p3-2-expan}.
% Changed 'given' into 'as given'.

\begin{prop}
  In any sector of angle less than $\frac{3\pi}{2}$, there exists a solution $u(x)$ of $\textrm{P}_{\textrm{III}}^{(\textrm{ii})}$ whose asymptotic behaviour as $|x| \to \infty$ is given by $u_{f}^{(m)}(x)$ in \eqref{p3-2-formal} with $m=0,1,2$.
\end{prop}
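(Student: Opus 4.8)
The plan is to repeat the argument that gave the analogous statement for $\textrm{P}_{\textrm{III}}^{(\textrm{i})}$, with one extra preliminary step needed because the leading term $x^{1/3}$ of \eqref{p3-2-formal} is not constant and the series runs over fractional powers of $x$, so Wasow's Theorem~\ref{Wasow Thm} cannot be applied in the variable $x$ directly. To remedy both issues at once I would first pass to the variable $t=x^{2/3}$ in which the expansion \eqref{p3-2-formal} becomes a genuine power series in $t^{-1}$, fixing a branch of $x^{2/3}$ that is single-valued on the given sector (legitimate since its central angle is less than $3\pi/2<2\pi$). Writing $u=x^{1/3}v$ and $U=x^{-2/3}V$ and using $dt/dx=\tfrac23 x^{-1/3}$, the first-order system \eqref{p3-2-first-order} becomes
\[
  \frac{dv}{dt}=\frac32\bigl(1+v^{2}V\bigr)+\Bigl(1-\tfrac32\beta\Bigr)\frac{v}{t},\qquad
  \frac{dV}{dt}=\frac32\bigl(1-vV^{2}\bigr)+\Bigl(\tfrac32\beta-2\Bigr)\frac{V}{t},
\]
whose right-hand side is polynomial in $(v,V)$ with coefficients that are holomorphic and bounded in $t$ (with asymptotic expansions in $t^{-1}$) for $|t|$ large. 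Since $x\mapsto t=x^{2/3}$ multiplies central angles by $2/3$, a sector of angle less than $3\pi/2$ in the $x$-plane becomes one of angle less than $\pi$ in the $t$-plane, which is exactly what Theorem~\ref{Wasow Thm} permits with $q=0$.

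Next I would centre the expansion at the origin by setting $v=a_0^{(m)}+\widehat v$, $V=A_0^{(m)}+\widehat V$ with $a_0^{(m)}=e^{2m\pi i/3}$ and $A_0^{(m)}=-a_0^{(m)}$ as in \eqref{p3-2-initial}. Because $(a_0^{(m)})^{3}=1$, the quantities $1+(a_0^{(m)})^{2}A_0^{(m)}=1-(a_0^{(m)})^{3}$ and $1-a_0^{(m)}(A_0^{(m)})^{2}$ both vanish, so $(\widehat v,\widehat V)$ satisfies a system of the form $\tfrac{d}{dt}(\widehat v,\widehat V)^{T}=f(t,\widehat v,\widehat V)$ in which $f$ is polynomial of degree $3$ in $(\widehat v,\widehat V)$ with coefficients holomorphic, bounded, and possessing asymptotic expansions in powers of $t^{-1}$ as $|t|\to\infty$; this is conditions~1 and~2 of Theorem~\ref{Wasow Thm} with $q=0$. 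Condition~4 holds because, by Proposition~\ref{prop-p3-2-formal}, $u_f^{(m)}$ and $U_f^{(m)}$ formally solve \eqref{p3-2-first-order}, hence $\sum_{n\ge1}a_n^{(m)}t^{-n}$ and $\sum_{n\ge1}A_n^{(m)}t^{-n}$ formally solve the shifted system. For condition~3 I would compute the limiting Jacobian at $\widehat v=\widehat V=0$,
\[
  J=\begin{pmatrix} -3(a_0^{(m)})^{2} & \tfrac32(a_0^{(m)})^{2}\\[1mm] -\tfrac32(a_0^{(m)})^{2} & 3(a_0^{(m)})^{2}\end{pmatrix},
\]
whose eigenvalues $\lambda_{1,2}=\pm\tfrac{3\sqrt3}{2}(a_0^{(m)})^{2}$ are nonzero for $m=0,1,2$.

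With the four hypotheses verified, Wasow's Theorem~\ref{Wasow Thm} (with $q=0$, applied on the $t$-sector whose angle is less than $\pi$) produces, for $|t|$ large, a solution $(\widehat v,\widehat V)$ with $\widehat v\sim\sum_{n\ge1}a_n^{(m)}t^{-n}$ and $\widehat V\sim\sum_{n\ge1}A_n^{(m)}t^{-n}$ in every proper subsector. Undoing the substitutions, $u(x)=x^{1/3}\bigl(a_0^{(m)}+\widehat v(x^{2/3})\bigr)$ is a solution of $\textrm{P}_{\textrm{III}}^{(\textrm{ii})}$ with the asymptotics \eqref{p3-2-expan}, valid on the original $x$-sector because its image under $t=x^{2/3}$ has angle less than $\pi$. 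I expect the only genuinely delicate point to be the preliminary change of variables: the exponent $2/3$ (rather than, say, $1/3$) must be chosen precisely so that the factor $dt/dx$ exactly cancels the fractional power multiplying the bounded terms while the remaining coefficients stay bounded and decay like $t^{-1}$, and the opening of the sector must be carried through this map to produce the bound $3\pi/2$. Everything else is the routine verification of Wasow's hypotheses, just as in the $\textrm{P}_{\textrm{III}}^{(\textrm{i})}$ case treated above.
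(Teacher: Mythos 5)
Your proposal is correct and follows essentially the same route as the paper: rescale $u=x^{1/3}v$, $U=x^{-2/3}V$, change the independent variable to remove the fractional powers, centre at $(a_0^{(m)},A_0^{(m)})$, and verify Wasow's four hypotheses (your limiting Jacobian and its nonzero eigenvalues $\pm\tfrac{3\sqrt3}{2}(a_0^{(m)})^2$ are consistent with the paper's \eqref{p3-2-J} and \eqref{p3-2-J-eig}). The only difference is cosmetic: the paper uses $y=x^{1/3}$ with $q=1$ (sectors of angle $<\pi/2$ in $y$), while you use $t=x^{2/3}$ with $q=0$ (sectors of angle $<\pi$ in $t$); both yield the same $x$-sectors of angle less than $3\pi/2$.
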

\begin{proof}
  First we transform \eqref{p3-2-first-order} into the standard form \eqref{wasow model eqn} in Wasow's Theorem. Let $y=x^{\frac{1}{3}}$ and write
  % Changed 'introduce' into 'write'.
  $$u(x)=y(v(y)+a_{0}^{(m)}) \quad \textrm{and} \quad U(x)=\frac{1}{y^{2}}(V(y)+A_{0}^{(m)}).$$ Then $(v,V)$ satisfies the following equations:
  % Changed 'equations' into 'equations:'.
  \begin{equation} \label{p3-2-first-order-new}
    \frac{1}{y} \frac{d}{dy} \left(\begin{matrix} v \\ V \end{matrix}\right) = \left( \begin{matrix}
    3+\frac{2-3\beta}{y^{2}}(v+a_{0}^{(m)})+3(v+a_{0}^{(m)})^{2}(V+A_{0}^{(m)})\\
      3+\frac{3\beta-4}{y^{2}}(V+A_{0}^{(m)})-3(v+a_{0}^{(m)})(V+A_{0}^{(m)})^{2}
    \end{matrix} \right)
  \end{equation}
  and has a formal expansion given by
  \begin{equation} \label{p3-2-v&V}
    v_{f}^{(m)}(y)=\sum\limits_{n=1}^\infty a_{n}^{(m)}y^{-2n} \quad \textrm{and} \quad V_{f}^{(m)}(y)=\sum\limits_{n=1}^\infty A_{n}^{(m)}y^{-2n}.
  \end{equation}
   Again, let us denote the right-hand side of \eqref{p3-2-first-order-new} by $(f_1, f_2)^T$. Then we have
  \begin{align} \label{p3-2-J}
    \lim\limits_{y\rightarrow\infty}\left.\begin{pmatrix}
      \frac{\partial f_{1}}{\partial v} & \frac{\partial f_{1}}{\partial V} \\
      \frac{\partial f_{2}}{\partial v} & \frac{\partial f_{2}}{\partial V}
    \end{pmatrix}\right|_{v=V=0}=\begin{pmatrix}
      6 a_0^{(m)} A_0^{(m)} & 3(a_0^{(m)})^{2}\\
      -3(A_0^{(m)})^{2} & -6a_0^{(m)}A_0^{(m)}
    \end{pmatrix}:= J.
  \end{align}
  The eigenvalues of $J$ are
  \begin{equation} \label{p3-2-J-eig}
    \lambda_{1,2} = \pm 3\sqrt{3} \, e^{-\frac{2m \pi i}{3}} \neq 0, \qquad m = 0,1,2;
  \end{equation}
  see \eqref{p3-2-initial} for the values of $a_0^{(m)}$ and $ A_0^{(m)}$. Then, according to Wasow's theorem, there exist true solutions $(v(y),V(y))$ to \eqref{p3-2-first-order-new} in any sector of angle less than $\frac{\pi}{2}$ with given asymptotic expansion in \eqref{p3-2-v&V}. Recalling $y=x^{\frac{1}{3}}$ and the relations between $(u,U)$ and $(v,V)$, our proposition follows.
\end{proof}

As in Section \ref{sec-p3-1}, with the above results, we are ready to prove our Theorem \ref{thm-p3-2}.

\bigskip

\noindent\emph{Proof of Theorem \ref{thm-p3-2}.} Instead of studying the original system \eqref{p3-2-first-order}, it is more convenient to consider \eqref{p3-2-first-order-new}.
% Changed 'the new one \eqref{p3-2-first-order-new}' into '\eqref{p3-2-first-order-new} instead'.
Let $(v_0, V_0)$ be a solution of \eqref{p3-2-first-order} with asymptotic behaviour given in \eqref{p3-2-v&V}. Consider a perturbation of this solution
\begin{equation}
  v=v_0 + \widehat v \quad \textrm{and} \quad V=V_0 + \widehat V, \qquad |\widehat v|, |\widehat V| \ll 1.
\end{equation}
Then $\widehat v$ and  $\widehat V$ satisfy
% Changed 'satisfy the equation below' into 'satisfy'.
\begin{equation*}
  \frac{1}{y}\frac{d}{dy} \left(\begin{matrix} \widehat v \\ \widehat V \end{matrix}\right) = \left( \begin{matrix} (1-\beta+2xu_0 U_0) \widehat u + x u_{0}^{2} \widehat U  + 2x u_0 \widehat u \widehat U + xU_0 \widehat u^2 + x \widehat u^2 \widehat U \\
      x(1-U_{0}^{2}) \widehat u + (\beta-2-2xu_{0}U_{0}) \widehat U - 2x U_0 \widehat u \widehat U - x u_0 \widehat U^2 - x \widehat u\widehat U^2
    \end{matrix} \right).
\end{equation*}
Since $|\widehat v|, |\widehat V |\ll 1$, it is sufficient to consider the following linear system to determine the asymptotic behaviour of $\widehat v$ and $\widehat V$
% Changed '\widehat v, \widehat V ' into '|\widehat v|, |\widehat V |'
\begin{eqnarray}
  \frac{1}{y}\frac{d}{dy} \left(\begin{matrix} \widehat v \\ \widehat V \end{matrix}\right) &=& \begin{pmatrix}
      \frac{2-3\beta}{y^2}+6 (v_0+b_0^{(m)}) (V_0+B_0^{(m)}) & 3(v_0+b_0^{(m)})^{2} \\
      -3(V_0+B_0^{(m)})^{2} & \frac{3\beta-4}{y^2}-6(v_0+b_0^{(m)})(V_0+B_0^{(m)})
    \end{pmatrix} \left(\begin{matrix} \widehat v \\ \widehat V \end{matrix}\right) \nonumber \\
    &:= &B(y) \left(\begin{matrix} \widehat v \\ \widehat V \end{matrix}\right).
\end{eqnarray}
By \eqref{p3-2-v&V}, the coefficient matrix $B(y)$ in the above equation have the following asymptotic behaviour
\begin{equation} \label{p3-2-B}
B(y) = J +\mathcal{O}\left(\frac{1}{y^2}\right), \qquad \textrm{as } |y| \to \infty,
\end{equation}
where $J$ is the matrix in \eqref{p3-2-J} with eigenvalues given in \eqref{p3-2-J-eig}.
By the similar analysis as in the proof of Theorem \ref{thm-p3-1}, we have
\begin{equation}
  \widehat v(y) \sim c_1 y^{d_1} e^{\lambda_1 \frac{y^2}{2} } + c_2  y^{d_2} e^{\lambda_2 \frac{y^2}{2} }, \qquad \textrm{as } |y| \to \infty
\end{equation}
with two arbitrary parameters $c_1$ and $c_2$. To ensure our assumption $|\widehat v| \ll 1$, we need to choose either $c_i = 0$ or a correct sector such that $\re e^{ \frac{\lambda_i}{2} y^2} < 0$. According to the specific values of $\lambda_{1,2}$ in \eqref{p3-2-J-eig}, we should choose the sectors in the $y$-plane to be
\begin{align}
        \biggl\{y\in\mathbb{C}\; |\;|y|>|y_0|, (\frac{m}{3} - \frac{1}{4})\pi + \frac{k \pi}{2} <\arg{y}< (\frac{m}{3} + \frac{1}{4})\pi + \frac{k \pi}{2}  \biggr\}, \  \textrm{for }m=0,1,2,
\end{align}
where $k=0,1,2,3$ and $y_0 \neq 0$. As $y=x^{\frac{1}{3}}$, then the first part of our theorem is proved.

To prove the second part of the theorem, we can use similar analysis as in the proof of Theorem \ref{thm-p3-1} to show that, there exists a unique solution for \eqref{p3-2-first-order-new} in each of the following sectors with its asymptotic behaviour determined by the expansion in \eqref{p3-2-v&V}
\begin{align}
        \biggl\{y\in\mathbb{C}\; |\;|y|>|y_0|, (\frac{m}{3} - \frac{1}{4})\pi + \frac{k \pi}{2} <\arg{y}< (\frac{m}{3}+ \frac{3}{4})\pi + \frac{k \pi}{2}  \biggr\}, \  \textrm{for }m=0,1,2.
\end{align}
Note that the angles of the above $y$-sectors are $\pi$. Then, the angles of the corresponding $x$-sectors should be $3\pi$. So, for any branch of $x^{1/3}$ in the complex $x$-plane, we obtain a unique solution with the given asymptotic expansion in \eqref{p3-2-expan}. This completes our proof. \hfill $\Box$

\bigskip

%%%%%%%%%%%%%%%%%%%%%%%%%%%%%%%%%%%%%%%%%%%%%%%%%%%%%%%%%%%%%%%%%%%%%%%%%%%%%%%%%%%%%%

\section{The $\textrm{P}_{\textrm{IV}}$ equations} \label{sec-p4}

The Hamiltonian $\textnormal{H}_{\textnormal{IV}}$ for $\textrm{P}_{\textrm{IV}}$ is well-known in the literature
\begin{equation}
	\textnormal{H}_{\textnormal{IV}} = 2 u U^2- ( u^2 +2x u +2 \k )U+\K u
\end{equation}
(see for example \cite[p. 220]{Gromak-book}). So, \eqref{Painleve IV} can be written as the following system of first order differential equations:
\begin{align}\label{p4-first-order}
\begin{cases}
  \D\frac{du}{dx}=\frac{ \partial \textnormal{H}_{\textnormal{IV}} }
  {\partial U}=4uU-u^{2}-2xu-2\kappa_{0},  \vspace{2mm} \\
  \D\frac{dU}{dx}=-\frac{ \partial \textnormal{H}_{\textnormal{IV}} }
  {\partial u}
  =-2U^{2}+2uU+2xU-\kappa_{\infty},
  \end{cases}
\end{align}
where $\a=-\k+2\K+1$ and $\b=-2\k^2$. Then the formal solutions are obtained as follows.

\begin{prop} \label{prop-p4-formal}
  As $|x|\to \infty$, \eqref{p4-first-order} has
  four different formal solutions
  \begin{equation} \label{p4-formal-1}
   \textnormal{ Case 1:} \quad
   u_{1,f}(x)=x\sum\limits_{n=0}^{\infty}a_{n}^{(1)}x^{-2n} \qquad \textrm{and}
\qquad    U_{1,f}(x)=x\sum\limits_{n=0}^{\infty}A_{n}^{(1)}x^{-2n},
  \end{equation}
  where $a_{0}^{(1)}=-\frac23$, $a_{1}^{(1)}=\a$, $A_{0}^{(1)}=\frac13$, $A_{1}^{(1)}=\frac12-\k+\frac12 \K$, and the subsequent coefficients are given by
   \eqref{p4-recurrence-1}.
 \begin{equation} \label{p4-formal-2}
    \textnormal{ Case 2:} \quad
    u_{2,f}(x)=x\sum\limits_{n=0}^{\infty}a_{n}^{(2)}x^{-2n} \qquad \textrm{and}
\qquad    U_{2,f}(x)=\frac1x\sum\limits_{n=0}^{\infty}A_{n}^{(2)}x^{-2n},
  \end{equation}
  where $a_{0}^{(2)}=-2$, $a_{1}^{(2)}=-\a$, $A_{0}^{(2)}=-\frac12\K$, and the subsequent coefficients are given by \eqref{p4-recurrence-2}.
 \begin{equation} \label{p4-formal-3}
 \textnormal{ Case 3:} \quad
    u_{3,f}(x)=\frac1x\sum\limits_{n=0}^{\infty}a_{n}^{(3)}x^{-2n} \qquad \textrm{and}
\qquad    U_{3,f}(x)=x\sum\limits_{n=0}^{\infty}A_{n}^{(3)}x^{-2n},
  \end{equation}
  where $a_{0}^{(3)}=\k$,  $A_{0}^{(3)}=1$, $A_{1}^{(3)}=-\frac12 (1-2\k+\K)$,
  and the subsequent coefficients are given by \eqref{p4-recurrence-3}.
 \begin{equation} \label{p4-formal-4}
 \textnormal{ Case 4:} \quad
    u_{4,f}(x)=\frac1x\sum\limits_{n=0}^{\infty}a_{n}^{(4)}x^{-2n} \qquad \textrm{and}
\qquad    U_{4,f}(x)=\frac1x\sum\limits_{n=0}^{\infty}A_{n}^{(4)}x^{-2n},
  \end{equation}
 where $a_{0}^{(4)}=-\k$, $A_{0}^{(4)}=\frac12\K$, and the subsequent coefficients  are given by \eqref{p4-recurrence-4}.
\end{prop}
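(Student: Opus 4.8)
The plan is to proceed as in the proofs of Propositions~\ref{prop-p3-1-formal} and~\ref{prop-p3-2-formal}, but with a preliminary dominant-balance analysis that locates the four admissible leading behaviours. First I would substitute a trial leading monomial $u\sim a_0x^{p}$, $U\sim A_0x^{q}$ into \eqref{p4-first-order} and compare the orders of the terms: in the first equation the right-hand side contributes $x^{p+q}$ (from $uU$), $x^{2p}$ (from $u^{2}$), $x^{p+1}$ (from $xu$) and $x^{0}$ (from $\k$), against $x^{p-1}$ on the left; in the second, $x^{p+q}$, $x^{2q}$, $x^{q+1}$ and $x^{0}$ against $x^{q-1}$. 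A short case check on which of these groups can dominate and still cancel forces $p,q\in\{1,-1\}$, and the four sign choices $(p,q)=(1,1),(1,-1),(-1,1),(-1,-1)$ give Cases 1--4. In each case the leading coefficients then solve a small explicit algebraic system; for example $(p,q)=(1,1)$ gives $a_0(4A_0-a_0-2)=0$ and $A_0(a_0+1-A_0)=0$, whence $a_0^{(1)}=-\tfrac23$, $A_0^{(1)}=\tfrac13$, and the three other cases likewise produce the stated values of $a_0^{(m)},A_0^{(m)}$. Matching at the next order then pins down $a_1^{(m)}$ and $A_1^{(m)}$ in terms of $\k,\K$ (equivalently $\a,\b$) wherever those are prescribed in the statement.

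Second, with the leading term fixed in a given case, I would insert the full series ansatz---$u=x\sum_{n\ge0}a_nx^{-2n}$ or $u=\tfrac1x\sum_{n\ge0}a_nx^{-2n}$, together with the matching expansion for $U$---into \eqref{p4-first-order} and equate coefficients of equal powers of $x$. Since every nonlinearity in \eqref{p4-first-order} is quadratic, the coefficient identity at a given order is affine-linear in the two highest-index unknowns $(a_{n+1},A_{n+1})$, the rest being convolutions of already-determined coefficients (with the constants $\k,\K$ entering only at the lowest orders). Solving this $2\times2$ linear system and re-indexing the convolution sums reproduces exactly the recurrences \eqref{p4-recurrence-1}--\eqref{p4-recurrence-4}. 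The coefficient matrix of this system does not depend on $n$---its entries are built only from $a_0^{(m)},A_0^{(m)}$---so no resonance occurs, and it is invertible in each of the four cases (it is triangular with nonzero diagonal, or, in Case~1, of nonzero determinant, consistently with the nonvanishing of the Jacobian eigenvalues used later in the analytic argument, cf.\ \eqref{p3-1-J-eig} and \eqref{p3-2-J-eig}). Hence the recurrences determine every remaining coefficient uniquely, so each of the four cases yields exactly one formal solution, and the four are distinct because they carry the four different pairs of leading powers $x^{\pm1}$ for $u$ and $U$.

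The step I expect to be most delicate is the bookkeeping: matching the raw coefficient identities to the precise index ranges appearing in \eqref{p4-recurrence-1}--\eqref{p4-recurrence-4}, and cleanly separating the low-order relations that carry the $\k,\K$-dependence (and so fix the explicit values of $a_1^{(m)},A_1^{(m)}$) from the generic part of the recursion valid for $n\ge1$. A secondary point requiring some care is the dominant-balance argument itself, where one must exclude non-integer or otherwise degenerate exponents $p,q$ and verify that exactly the four cases survive; but conceptually this is routine once one observes that the quadratic structure of \eqref{p4-first-order} makes the recursion triangular with an invertible, order-independent leading matrix.
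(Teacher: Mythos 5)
Your proposal is correct and follows essentially the same route as the paper, whose entire proof is the one-line ``substituting \eqref{p4-formal-1}--\eqref{p4-formal-4} into \eqref{p4-first-order} gives us the results''; your substitution, leading-order balances (e.g.\ $a_0(4A_0-a_0-2)=0$, $A_0(a_0+1-A_0)=0$ in Case~1) and the observation that the $(a_{n+1},A_{n+1})$-system has an $n$-independent invertible coefficient matrix all check out against \eqref{p4-recurrence-1}--\eqref{p4-recurrence-4}. The dominant-balance classification of admissible exponents is extra motivation the paper omits (the proposition only asserts existence of these four formal solutions, not that they exhaust all possibilities), but it does no harm.
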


\begin{proof}
  Substituting \eqref{p4-formal-1}-\eqref{p4-formal-4} into \eqref{p4-first-order} gives us the results.
\end{proof}

\begin{rmk}
	When $\k=0$ in Case 3 and Case 4, we obtain the trivial solutions $u_{3,f}=0$ and $u_{4,f}=0$. Similarly, when $\K=0$ in Case 2 and Case 4, the trivial solutions $U_{2,f}=0$ and $U_{4,f}=0$ occur. We can exclude these trivial cases in our subsequent analysis.
\end{rmk}

Using Wasow's theorem again, we get the following result.

\begin{prop}
  In any sector of angle less than $\pi/2$, there exists a solution $u(x)$ of $\textnormal{P}_{\textnormal{IV}}$ whose asymptotic behaviour as $|x| \to \infty$ is given by $u_{mf}(x)$ in Proposition \ref{prop-p4-formal} with $m=1,2,3, 4$.
\end{prop}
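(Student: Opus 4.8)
The plan is to treat the four cases $m=1,2,3,4$ uniformly by the same device already used above for $\textrm{P}_{\textrm{III}}^{(\textrm{i})}$ and $\textrm{P}_{\textrm{III}}^{(\textrm{ii})}$: reduce the Hamiltonian system \eqref{p4-first-order} to Wasow's normal form \eqref{wasow model eqn} and then invoke Theorem~\ref{Wasow Thm}. First I would peel off the leading behaviour prescribed in Proposition~\ref{prop-p4-formal} and rescale the unknowns: for $m=1$ set $u=x(v+a_0^{(1)})$, $U=x(V+A_0^{(1)})$; for $m=2$ set $u=x(v+a_0^{(2)})$, $U=\frac1x(V+A_0^{(2)})$; for $m=3$ set $u=\frac1x(v+a_0^{(3)})$, $U=x(V+A_0^{(3)})$; and for $m=4$ set $u=\frac1x(v+a_0^{(4)})$, $U=\frac1x(V+A_0^{(4)})$. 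Substituting into \eqref{p4-first-order} and dividing each of the two resulting equations by the power of $x$ that absorbs all growing terms, one checks that in every case the system takes the form
\[
  x^{-1}\frac{d}{dx}\begin{pmatrix} v\\ V\end{pmatrix}=f(x,v,V),
\]
where $f$ is a vector polynomial in $v$ and $V$ whose coefficients are polynomials in $x^{-1}$ — in fact only the monomials $1$ and $x^{-2}$ occur — holomorphic for $|x|>|x_0|$, and whose constant part in $(v,V)$ is $O(x^{-2})$. This verifies conditions~1 and~2 of Theorem~\ref{Wasow Thm} with $q=1$, so the admissible central angle is $\pi/(q+1)=\pi/2$, exactly the bound in the statement; condition~4 holds because, by Proposition~\ref{prop-p4-formal}, the series $\sum_{n\geq1}a_n^{(m)}x^{-2n}$ and $\sum_{n\geq1}A_n^{(m)}x^{-2n}$ formally solve the transformed system.

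To verify condition~3 I would compute the Jacobian of $f$ at $v=V=0$ in the limit $x\to\infty$. Using the values of $a_0^{(m)},A_0^{(m)}$ from Proposition~\ref{prop-p4-formal}, a direct computation gives
\[
  J^{(1)}=\begin{pmatrix}\tfrac23 & -\tfrac83\\ \tfrac23 & -\tfrac23\end{pmatrix},\quad
  J^{(2)}=\begin{pmatrix}2 & 0\\ -\K & -2\end{pmatrix},\quad
  J^{(3)}=\begin{pmatrix}2 & 4\k\\ 0 & -2\end{pmatrix},\quad
  J^{(4)}=\begin{pmatrix}-2 & 0\\ 0 & 2\end{pmatrix},
\]
with eigenvalues $\pm 2i/\sqrt3$ for $m=1$ and $\pm2$ for $m=2,3,4$; in particular none of them is zero. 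Theorem~\ref{Wasow Thm} then supplies, for $|x|$ large in any sector of central angle less than $\pi/2$, a true solution $(v(x),V(x))$ of the transformed system with $v(x)\sim\sum_{n\geq1}a_n^{(m)}x^{-2n}$ and $V(x)\sim\sum_{n\geq1}A_n^{(m)}x^{-2n}$. Reversing the rescaling produces a solution $(u(x),U(x))$ of \eqref{p4-first-order}; since its leading term is nonzero once the trivial cases from the remark following Proposition~\ref{prop-p4-formal} are excluded, $u$ does not vanish for $|x|$ large, so $U$ can be eliminated to recover \eqref{Painleve IV}, and the asymptotics of $u$ are exactly those of $u_{mf}$ in \eqref{p4-expan}.

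The one step demanding genuine care, rather than routine computation, is this first reduction: after the shift-and-rescale, the terms of \eqref{p4-first-order} that become of order $x^2$ (they come from whichever of $u,U$ scales like $x$) must have their $(v,V)$-constant part vanish, for otherwise division by $x^2$ would leave a nonzero constant in $f$ and condition~4 would fail. That constant is precisely the leading-order algebraic relation defining $a_0^{(m)},A_0^{(m)}$ in Proposition~\ref{prop-p4-formal}, so it does vanish by construction — for instance, for $m=2$ the choice $a_0^{(2)}=-2$ turns $(v+a_0^{(2)})^2+2(v+a_0^{(2)})$ into $v^2-2v$, which has no constant term, and similarly for the other cases and for the $U$-equation. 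Once this cancellation has been checked case by case, the remaining verifications — the polynomial form of $f$, its expansion in powers of $x^{-1}$, condition~4, and the eigenvalue computation above — are mechanical.
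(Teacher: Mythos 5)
Your proposal is correct and follows essentially the same route as the paper: the identical shift-and-rescale substitutions in each of the four cases, reduction to Wasow's form with $q=1$ (hence the $\pi/2$ bound), the same Jacobian matrices $J^{(1)},\dots,J^{(4)}$ with eigenvalues $\pm\frac{2\sqrt{3}}{3}i$ and $\pm 2$, and the appeal to Theorem~\ref{Wasow Thm}. Your explicit remark that the leading-order algebraic relations defining $a_0^{(m)},A_0^{(m)}$ are exactly what kill the would-be constant terms after rescaling is a useful clarification that the paper leaves implicit.
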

\begin{proof}
 To arrive at the standard form \eqref{wasow model eqn} in Wasow's theorem,
 we consider the following change of variables.

  Case 1: Let $v(x)=x^{-1}u_{1,f}(x)-a_{0}^{(1)}$ and $V(x)= x^{-1}U_{1,f}(x)-A_0^{(1)}$. Then $(v,V)$ satisfies the following equations
  \begin{equation} \label{p4-first-order-1-new}
    \frac{1}{x}\frac{d}{dx} \left(\begin{matrix} v \\ V \end{matrix}\right) = \left( \begin{matrix}
      4(v+a_{0}^{(1)})(V+A_{0}^{(1)})-(v+a_{0}^{(1)})^{2}-2(v+a_{0}^{(1)})
      -\frac{1}{x^{2}}(v+a_{0}^{(1)})-\frac{2\kappa_{0}}{x^{2}}
       \\
       -2(V+A_{0}^{(1)})^{2}+2(v+a_{0}^{(1)})(V+A_{0}^{(1)})+2(V+A_{0}^{(1)})
       -\frac{1}{x^{2}}(V+A_{0}^{(1)})-\frac{\kappa_{\infty}}{x^{2}}
    \end{matrix} \right)
  \end{equation}
  and has a formal expansion given by
  \begin{equation}\label{v-formal}
    v_{f}(x)=\sum\limits_{n=1}^\infty a_{n} x^{-2n} \qquad \textrm{and} \qquad V_{f}(x)=\sum\limits_{n=1}^\infty A_{n} x^{-2n}
  \end{equation}
for constants $a_{n}$ and $A_{n}$. Let us denote the
right-hand side of \eqref{p4-first-order-1-new} by $(f_1, f_2)^{T}$, then we have
  \begin{align} \label{p4-J-1}
    \lim\limits_{x\rightarrow\infty}\left.\begin{pmatrix}
      \frac{\partial f_{1}}{\partial v} & \frac{\partial f_{1}}{\partial V} \\
      \frac{\partial f_{2}}{\partial v} & \frac{\partial f_{2}}{\partial V}
    \end{pmatrix}\right|_{v=V=0}=\begin{pmatrix}
      \frac23 & -\frac83 \\
      \frac23 & -\frac23
    \end{pmatrix}:= J_1.
  \end{align}
  The eigenvalues of $J_1$ are
 $
    \lambda_{1,2} = \pm \frac{2\sqrt 3}{3}i \neq 0.
 $
So by Wasow's theorem \ref{Wasow Thm}, we have, for any sector of angle smaller than $\pi/2$, there exist solutions $(u, U)$ of (\ref{p4-first-order}) with respective asymptotic behaviours $(u_{1,f}, U_{1,f})$ as described in \eqref{p4-formal-1}.
Next, we will prove the other cases.

Case 2:  Let $v(x)=x^{-1}u_{2,f}(x)-a_{0}^{(2)}$ and $V(x)= xU_{2,f}(x)-A_0^{(2)}$. Inserting these identities into \eqref{p4-first-order} then gives
\begin{equation*} \label{p4-first-order-2-new}
    \frac{1}{x}\frac{d}{dx} \left(\begin{matrix} v \\ V \end{matrix}\right) = \left( \begin{matrix}
     \frac{4}{x^{2}}(v+a_{0}^{(2)}) (V+A_{0}^{(2)})-(v+a_{0}^{(2)})^{2}-2(v+a_{0}^{(2)})
     -\frac{1}{x^{2}}(v+a_{0}^{(2)})-\frac{2\kappa_{0}}{x^{2}}
       \\
      -\frac{2}{x^{2}}(V+A_{0}^{(2)})^{2}+2(v+a_{0}^{(2)})(V+A_{0}^{(2)})
      +2(V+A_{0}^{(2)})+\frac{1}{x^{2}}(V+A_{0}^{(2)})-\kappa_{\infty}
    \end{matrix} \right)
  \end{equation*}
and $(v,V)$ has a formal expansion given by \eqref{v-formal}.

Case 3:  Let $v(x)=xu_{3,f}(x)-a_{0}^{(3)}$ and $V(x)= x^{-1}U_{3,f}(x)-A_0^{(3)}$. Then $(v,V)$ satisfies
\begin{equation*} \label{p4-first-order-3-new}
    \frac{1}{x}\frac{d}{dx} \left(\begin{matrix} v \\ V \end{matrix}\right) = \left( \begin{matrix}
     4(v+a_{0}^{(3)})(V+A_{0}^{(3)})-\frac{1}{x^{2}}(v+a_{0}^{(3)})^{2}-2(v+a_{0}^{(3)})
     +\frac{1}{x^{2}}(v+a_{0}^{(3)})-2\kappa_{0}
       \\
      -2(V+A_{0}^{(3)})^{2}+\frac{2}{x^{2}}(v+a_{0}^{(3)})(V+A_{0}^{(3)})+2(V+A_{0}^{(3)})-\frac{1}{x^{2}}(V+A_{0}^{(3)})-\frac{\kappa_{\infty}}{x^{2}}
    \end{matrix} \right)
  \end{equation*}
and has a formal expansion given by \eqref{v-formal}.

Case 4:  Let $v(x)=xu_{4,f}(x)-a_{0}^{(4)}$ and $V(x)= x U_{4,f}(x)-A_0^{(4)}$. Then $(v,V)$ satisfies
\begin{equation*} \label{p4-first-order-4-new}
    \frac{1}{x}\frac{d}{dx} \left(\begin{matrix} v \\ V \end{matrix}\right) = \left( \begin{matrix}
     \frac{4}{x^{2}}(v+a_{0}^{(4)})(V+A_{0}^{(4)})-\frac{1}{x^{2}}(v+a_{0}^{(4)})^{2}
     -2(v+a_{0}^{(4)})+\frac{1}{x^{2}}(v+a_{0}^{(4)})-2\kappa_{0}
       \\
      -\frac{2}{x^{2}}(V+A_{0}^{(4)})^{2}+\frac{2}{x^{2}}(v+a_{0}^{(4)})(V+A_{0}^{(4)})
      +2(V+A_{0}^{(4)})+\frac{1}{x^{2}}(V+A_{0}^{(4)})-{\kappa_{\infty}}
    \end{matrix} \right)
  \end{equation*}
  and has a formal expansion given by \eqref{v-formal}.

Following a similar manner, we then construct the Jacobian of $(f_1,f_2)^T$ evaluated at $(v,V)=0$ as $|x|\to \infty$,
\begin{align*}
    J_2=\begin{pmatrix}
      2 & 0 \\
      -\kappa_{\infty} & -2
    \end{pmatrix},
   && J_3=\begin{pmatrix}
      2 & 4\kappa_{0} \\
      0 & -2
    \end{pmatrix},
   &&
   J_4=\begin{pmatrix}
      -2 & 0 \\
      0 & 2
    \end{pmatrix}
 \end{align*}
for Case 2, Case 3 and Case 4, respectively. The eigenvalues of $J$ are
$
	\lambda_{1,2}=\pm 2.
$
Since each eigenvalue is different from zero, all the conditions of Wasow's theorem are fulfilled. Then our proposition follows from Wasow's theorem.
\end{proof}
Now, we are ready to prove our last theorem.

\begin{proof}[Proof of Theorem \ref{p4-thm}]
For simplicity, we only consider Case 1. Instead of studying \eqref{p4-first-order}, it is more convenient to consider \eqref{p4-first-order-1-new}. Let $(v_0, V_0)$ be a solution of \eqref{p4-first-order-1-new} with asymptotic behaviour given in \eqref{v-formal}. Consider a perturbation of this solution
\begin{equation}
  v=v_0 + \widehat v \quad \textrm{and} \quad V=V_0 + \widehat V, \qquad |\widehat v|, |\widehat V| \ll 1.
\end{equation}
Then $\widehat v$ and  $\widehat V$ satisfy the equations
\[
    \frac1x\frac{d\widehat v}{dx}= 4(v_0+\widehat v+a_{0}^{(1)}) \widehat V +4\widehat v
    (V_0+A_{0}^{(1)}) -\widehat v (2v_0+\widehat v+2a_{0}^{(1)}) -2 \widehat v
    -\frac{1}{x^2} \widehat v
    ,
\]
and
\[
    \frac1x\frac{d\widehat V}{dx}= -2\widehat V (2V_0+\widehat V+2A_{0}^{(1)})+2(v_0
    +\widehat v+a_{0}^{(1)}) \widehat V +2\widehat v
    (V_0+A_{0}^{(1)})  +2 \widehat V-\frac{1}{x^2} \widehat V.
\]
Since $|\widehat v|, |\widehat V| \ll 1$, it is sufficient to consider the following
linear system of differential equations to determine the asymptotic behaviour of $\widehat v$ and $\widehat V$
\begin{align*}
  \hspace{-35pt}\frac1x\frac{d}{dx} \left(\begin{matrix} \widehat v \\ \widehat V \end{matrix}\right) &=\begin{pmatrix}
     4
    (V_0+A_{0}^{(1)}) - 2(v_0+a_{0}^{(1)}) -2
    -\frac{1}{x^2}  & 4(v_0+a_{0}^{(1)}) \\
     2
    (V_0+A_{0}^{(1)})
    & -4 (V_0+A_{0}^{(1)})+2(v_0+a_{0}^{(1)})
    +2 -\frac{1}{x^2}
    \end{pmatrix} \left(\begin{matrix} \widehat v \\ \widehat V \end{matrix}\right)\\
    &:= B_1(x) \left(\begin{matrix} \widehat v \\ \widehat V \end{matrix}\right).
\end{align*}
By \eqref{p4-formal-1}, the coefficient matrix $B_1(x)$ in the above equation has the following asymptotic behaviour
\begin{equation} \label{p4-B-1}
B_1(x) = J_1 +\mathcal{O}\left(\frac{1}{x}\right), \qquad \textrm{as } |x| \to \infty,
\end{equation}
where $J_1$ is the matrix in \eqref{p4-J-1} with eigenvalues given by $\lambda_{1,2}=\pm\frac{2\sqrt 3}{3}i$.
By a similar analysis as the ones in the proofs of Theorem~1 and Theorem~2, we have
\begin{equation}
  \widehat v(x) \sim c_1 x^{d_1} e^{\lambda_1 \frac{x^2}{2} } + c_2  x^{d_2} e^{\lambda_2 \frac{x^2}{2} }, \qquad \textrm{as } |x| \to \infty
\end{equation}
with two arbitrary parameters $c_1$ and $c_2$. To ensure that $|\widehat v |\ll 1$, we need to choose either $c_i = 0$ or a correct sector such that $\re e^{\lambda_i \frac{x^2}{2} } < 0$. According to the specific values of $\lambda_{1,2}$, we should choose the sectors to be
\[
 S_{k}^{(1)}=
\left\{x\in\mathbb{C} \big | |x|>|x_0|, \frac{1}{2}k\pi<\arg{x}<\frac{1}{2}\pi+\frac{1}{2}k\pi\right\}.
\]

To prove the second part of our theorem and achieve uniqueness, we consider solutions in the following sectors
\begin{align*}
      S_{k, \pm\varepsilon}^{(1)} =
       \biggl\{x\in\mathbb{C}\; |\;|x|>|x_0|,  \frac12 k \pi \pm\varepsilon <\arg{x}< \frac{\pi}{2} + \frac12 k \pi \pm\varepsilon \biggr\}
\end{align*}
with a small $\varepsilon>0$.
Let $(u_1, U_1)$ and $(u_2, U_2)$ be two solutions to \eqref{p4-first-order} whose asymptotic expansions are given by \eqref{p4-formal-1} in sectors $ S_{k, \varepsilon}^{(1)}$ and $ S_{k+1, -\varepsilon}^{(1)}$, respectively. Consider the difference of these solutions
\begin{equation}
  w:= u_1-u_2 \quad \textrm{and} \quad W:=U_1-U_2,
\end{equation}
which are defined in the overlap of the two sectors
\begin{equation}
  \widehat S_{k,\varepsilon}^{(1)} = S_{k, \varepsilon}^{(1)} \cap  S_{k+1, -\varepsilon}^{(1)}.
\end{equation}
Since $(u_1, U_1)$ and $(u_2, U_2)$ satisfy the same asymptotic expansion, we have
\begin{equation} \label{p4-w-asy-1}
  w(x) = O(x^{-j}) \quad \textrm{and} \quad W(x) = O(x^{-j}) \qquad \textrm{as } |x| \to \infty \textrm{ and } x \in \widehat S_{k,\varepsilon}^{(1)}
\end{equation}
for all $j \in \mathbb{N}$. Moreover, one can verify that $w(x)$ and $W(x)$ satisfy
\begin{align*}
 \frac1x \frac{d}{dx} \left(\begin{matrix} w \\  W \end{matrix}\right) &=\begin{pmatrix}
     4U_1 -2u_1-2 & 4u_1 \\
     2U_1
    & -4 U_1 +2 u_1+2
    \end{pmatrix} \left(\begin{matrix} w \\ W \end{matrix}\right)
     := \widetilde B_1(x) \left(\begin{matrix} w \\ W \end{matrix}\right).
\end{align*}
The coefficient matrix $\widetilde B_1(x)$ satisfies the same asymptotic behaviour as $B_1(x)$ does in \eqref{p4-B-1}. Then by the same argument as in the first part, we have
\begin{equation}
  w(x) \sim c_1 x^{d_1} e^{\lambda_1 \frac{x^2}{2} } + c_2  x^{d_2} e^{\lambda_2 \frac{x^2}{2} }, \qquad \textrm{as } |x| \to \infty.
\end{equation}
To ensure that $w(x)$ satisfies the asymptotic behaviour described in \eqref{p4-w-asy-1} for $x \in\widehat S_{k,\varepsilon}^{(1)}$, we must choose $c_1=c_2 =0$ in the above formula. This means $u_1 = u_2$ in $\widehat S_{k,\varepsilon}^{(1)}$ and the sector of validity can be extended to $ \Omega_{k}^{(1)}$ in Theorem~\ref{p4-thm}. The other cases can be handled in a similar manner. This completes the proof.
\end{proof}

%\section{Discussion}

\subsection*{Acknowledgements}

  We would like to thank Professors Nalini Joshi, Alexander Kitaev and Yoshitsugu Takei for their helpful comments and discussions.

  DD and PT were partially supported by a grant from the Research Grants Council of the Hong Kong Special Administrative Region, China (Project No. CityU 100910). In addition, DD was partially supported partially supported by a grant from the City University of Hong Kong (Project No. 7002883).

%%%%%%%%%%%%%%%%%%%%%%%%%%%%%%%%%%%%%%%%%%%%%%%%%%%%%%%%%%%%%%%%%%%%%%%%%%%%%%%%%%%%%%%%%%

\end{document}